\date{}
\newtheorem{lemma}{Lemma}
\newtheorem{proposition}{Proposition}
\newtheorem{theorem}{THEOREM}
\theoremstyle{definition}
\newtheorem{example}{Example}
\newtheorem{remark}{Remark}
\begin{document}

\title{{\bf A new result on boundedness of the Riesz potential in central Morrey--Orlicz spaces}}
\author {Evgeniya Burtseva and Lech Maligranda}

\maketitle

\footnotetext[1]{2020 {\it Mathematics Subject Classification}: Primary 46E30; Secondary 42B20, 42B35}
\footnotetext[2]{{\it Key words and phrases}: Riesz potential, Orlicz functions, Orlicz spaces, Morrey--Orlicz spaces, 
central Morrey--Orlicz spaces, weak central Morrey--Orlicz spaces}

\begin{abstract}
\noindent {\footnotesize We improve our results on boundedness of the Riesz potential in the central Morrey--Orlicz 
spaces and the corresponding weak-type version. We also present two new properties of the central Morrey--Orlicz spaces: 
nontriviality and inclusion property.}
\end{abstract}

\section{Central Morrey--Orlicz spaces} \label{sec:central}

A function $\Phi \colon [0,\infty) \to [0,\infty]$ is called a {\it Young function}, if it is a nondecreasing convex 
function with $\lim_{u \to 0^+}\Phi(u) = \Phi(0) = 0$, and not identically $0$ or $\infty$ in $(0,\infty)$. 
It may have jump up to $\infty$ at some point $u>0$, but then it should be left continuous at $u$. 

To each Young's function $\Phi$ one can associate another convex function $\Phi^*$, i.e., the {\it complementary function} 
to $\Phi$, which is defined by 
$$
\Phi^* (v) = \sup_{u>0} \, [uv - \Phi(u)] ~~ {\rm for} ~~ v \geq 0. 
$$
Then $\Phi^*$ is also a Young's function and $\Phi^{**} = \Phi$. Note that $u \leq \Phi^{-1}(u) \Phi^{*^{-1}}(u) \leq 2 u$ 
for all $u > 0,$ where $\Phi^{-1}$ is the right-continuous inverse of $\Phi$ defined by 
$$
\Phi^{-1}(v) = \inf \{u \ge 0 \colon \Phi(u)>v\} ~ {\rm with} ~ \inf \emptyset = \infty.
$$ 

We say that Young's function $\Phi $ satisfies the $\Delta_2$-{\it condition} and we write shortly $\Phi \in \Delta_2$, if 
$0< \Phi(u) < \infty$ for $u>0$ and there exists a constant $D_2 > 1$ such that 
\begin{equation*} \label{D2}
\Phi(2u) \leq D_2 \Phi(u) ~~ {\rm for ~all} ~~ \ u > 0. 
\end{equation*}

For any Young's function $\Phi$, the number $\lambda \in \mathbb R$ and an open ball 
$B_r = \{x \in \mathbb{R}^n \colon |x|<r\}, r>0$ we can define {\it central Morrey--Orlicz spaces $M^{\Phi,\lambda}(0)$} 
as all $f \in L^1_{loc}({\mathbb R^n})$ such that 
$$
\|f\|_{M^{\Phi, \lambda}(0)} = \sup_{r > 0} \|f\|_{\Phi, \lambda, B_r} < \infty, 
$$
where
$$
\|f\|_{\Phi, \lambda, B_r} = \inf \big\{ \varepsilon > 0\colon \dfrac 1 {|B_r|^{\lambda}} \int_{B_r} \Phi\big(\dfrac {|f(x)|} \varepsilon \big) 
\,dx \leq 1 \big\}. 
$$
Similarly, the {\it weak central Morrey--Orlicz spaces $WM^{\Phi,\lambda}(0)$} are defined as
\begin{equation*} \label{wCMOnorm}
WM^{\Phi,\lambda}(0) = \left\{ f \in L^1_{loc}({\mathbb R^n})\colon \|f\|_{WM^{\Phi,\lambda}(0)} = 
\sup_{r > 0} \|f\|_{\Phi, \lambda, B_r, \infty} < \infty \right\}, 
\end{equation*}
where
$$
\|f\|_{\Phi, \lambda, B_r, \infty} = \inf \big\{ \varepsilon > 0 \colon \sup_{u>0} \Phi(\dfrac{u}{\varepsilon}) \, \dfrac 1 {|B_r|^{\lambda}} 
\, d(f\chi_{B_r}, u) 
\leq 1\big\}, 
$$
and $d(f, u) = | \{x \in {\mathbb R^n} \colon |f(x)| > u \}|$. 

\vspace{2mm}

The properties of these spaces can be found in \cite{BMM2022}. If $\Phi(u) = u^p,\,1\le p<\infty$ and $\lambda \in \mathbb{R},$ 
then $M^{\Phi,\lambda}(0) = M^{p,\lambda}(0)$ and $WM^{\Phi,\lambda}(0) = WM^{p,\lambda}(0)$ are classical central and 
weak central Morrey spaces. Moreover, for $\lambda = 0$ the spaces $M^{\Phi, 0}(0) = L^{\Phi}(\mathbb R^n)$ and 
$WM^{\Phi, 0}(0) = WL^{\Phi}(\mathbb R^n)$ are classical Orlicz and weak Orlicz spaces.

In the following lemma and later, $B(x_0, r_0)$ will denote an open ball with the center at $x_0 \in \mathbb R^n$ and 
radius $r_0 > 0$, that is, $B(x_0, r_0) = \{x \in {\mathbb R^n}\colon |x - x_0| < r_0\}$. 

\begin{lemma}\label{lemma1} Let $\Phi$ be a Young's function, $\Phi^*$ its complementary function, $0 \le \lambda \le 1$ 
and $r > 0$. Then
\begin{itemize} 
\item[$({\rm i})$] $\int_{B_r} |f(x)g(x)|\,dx \leq 2\, |B_r|^{\lambda}\, \|f\|_{\Phi, \lambda, B_r} \|g\|_{\Phi^*, \lambda, B_r}$.
\item[$({\rm ii})$] $\| \chi_{B(x_0, r_0)}\|_{\Phi^{\ast}, \lambda, B_r} \le \frac{|B_r \cap B(x_0, r_0)|}{|B_r|^\lambda}\Phi^{-1} 
\left(\frac{|B_r|^\lambda}{|B_r \cap B(x_0, r_0)|} \right) $, {\it where $B_r \cap B(x_0, r_0) \neq \emptyset$ for $x_0 \in \mathbb 
R^n$ and $r_0 > 0$.}

In particular, $\|\chi_{B_r} \|_{\Phi^*, \lambda, B_r} \leq \dfrac {\Phi^{-1} \left( |B_r|^{\lambda-1} \right)} {|B_r|^{\lambda-1}}.$
\item[$({\rm iii})$] $\| \chi_{B_t}\|_{\Phi, \lambda, B_r} = 1/ {\Phi^{-1} \left(\frac{|B_r|^\lambda}{|B_r \cap B_t|}\right) }$ and 
$\|\chi_{B_t}\|_{M^{\Phi, \lambda} (0)} = \dfrac{1}{\Phi^{-1}(|B_t|^{\lambda - 1})}$ {\it for any $t > 0$.}
\end{itemize}
\end{lemma}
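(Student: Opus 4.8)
The plan is to notice that, for each fixed $r>0$, the quantity $\|f\|_{\Phi,\lambda,B_r}$ is nothing but the Luxemburg norm associated with the Young function $\Phi$ on the finite measure space $(B_r,d\mu_r)$, where $d\mu_r=|B_r|^{-\lambda}\,dx$. Granting this, (i) is the classical H\"older inequality for Orlicz spaces over $(B_r,\mu_r)$, while (ii) and (iii) follow by evaluating that Luxemburg norm on a characteristic function and invoking the inversion inequality $u\le\Phi^{-1}(u)\,\Phi^{*^{-1}}(u)\le 2u$ recorded above. For (i) I would first dispose of the trivial cases in which $\|f\|_{\Phi,\lambda,B_r}$ or $\|g\|_{\Phi^*,\lambda,B_r}$ equals $0$ or $\infty$ (in the former case the corresponding function vanishes a.e.\ on $B_r$ and both sides are $0$; in the latter the inequality reads ``$\le\infty$''). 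Assuming $a:=\|f\|_{\Phi,\lambda,B_r}$ and $b:=\|g\|_{\Phi^*,\lambda,B_r}$ are finite and positive, the definitions together with monotone convergence (which allows inserting $\varepsilon=a$ and $\varepsilon=b$ into the defining inequalities) give $\frac{1}{|B_r|^\lambda}\int_{B_r}\Phi(|f|/a)\,dx\le 1$ and $\frac{1}{|B_r|^\lambda}\int_{B_r}\Phi^*(|g|/b)\,dx\le 1$. Applying Young's inequality $uv\le\Phi(u)+\Phi^*(v)$ pointwise with $u=|f(x)|/a$, $v=|g(x)|/b$, integrating over $B_r$ and dividing by $|B_r|^\lambda$ yields $\frac{1}{ab\,|B_r|^\lambda}\int_{B_r}|fg|\,dx\le 2$, which is (i).

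The second step is an auxiliary identity for the Luxemburg norm of a characteristic function: if $E\subset B_r$ with $0<|E|<\infty$ and $\Psi$ is any Young function, then
\[
\|\chi_E\|_{\Psi,\lambda,B_r}=\inf\Big\{\varepsilon>0\colon \Psi(1/\varepsilon)\,\tfrac{|E|}{|B_r|^\lambda}\le 1\Big\}=\frac{1}{\Psi^{-1}\!\big(|B_r|^\lambda/|E|\big)},
\]
the last equality being a consequence of the fact that $\Psi$ is nondecreasing, via the identity $\inf\{u\ge 0\colon \Psi(u)>v\}=\sup\{u\ge 0\colon \Psi(u)\le v\}$. For (ii) I take $E=B_r\cap B(x_0,r_0)$, which is open and nonempty, hence of positive finite measure, and $\Psi=\Phi^*$, so that $\|\chi_{B(x_0,r_0)}\|_{\Phi^*,\lambda,B_r}=1/\Phi^{*^{-1}}\!\big(|B_r|^\lambda/|E|\big)$. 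The left half of the inversion inequality, applied at $u=|B_r|^\lambda/|E|$, gives $\Phi^{*^{-1}}(u)\ge u/\Phi^{-1}(u)$, whence $\|\chi_{B(x_0,r_0)}\|_{\Phi^*,\lambda,B_r}\le \frac{|E|}{|B_r|^\lambda}\,\Phi^{-1}\!\big(|B_r|^\lambda/|E|\big)$, which is the asserted bound; the ``in particular'' statement is the special case $x_0=0$, $r_0=r$, for which $E=B_r$ and $|B_r|^\lambda/|E|=|B_r|^{\lambda-1}$.

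For (iii) the same identity with $\Psi=\Phi$ and $E=B_t$ reads $\|\chi_{B_t}\|_{\Phi,\lambda,B_r}=1/\Phi^{-1}\!\big(|B_r|^\lambda/|B_r\cap B_t|\big)$, which is already the first formula. Since $B_r$ and $B_t$ are concentric, $|B_r\cap B_t|=|B_{\min(r,t)}|$, and as $\rho\mapsto|B_\rho|$ is strictly increasing I would minimize $r\mapsto |B_r|^\lambda/|B_r\cap B_t|$ over $r>0$: for $0<r\le t$ it equals $|B_r|^{\lambda-1}$, nonincreasing in $r$ because $\lambda-1\le 0$; for $r\ge t$ it equals $|B_r|^\lambda/|B_t|$, nondecreasing in $r$ because $\lambda\ge 0$. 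Hence the minimum is attained at $r=t$ with value $|B_t|^{\lambda-1}$, and since $\Phi^{-1}$ is nondecreasing and this minimum is actually attained, $\|\chi_{B_t}\|_{M^{\Phi,\lambda}(0)}=\sup_{r>0}\|\chi_{B_t}\|_{\Phi,\lambda,B_r}=1/\Phi^{-1}(|B_t|^{\lambda-1})$, the second formula. The hypothesis $0\le\lambda\le 1$ is used precisely in this monotonicity argument.

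I expect the only delicate point to be the displayed auxiliary identity $\|\chi_E\|_{\Psi,\lambda,B_r}=1/\Psi^{-1}(|B_r|^\lambda/|E|)$ — evaluating the Luxemburg norm of a characteristic function as the reciprocal of the generalized inverse of the Young function — since one must keep careful track of jumps and flat pieces of $\Phi$ (respectively $\Phi^*$) and of which one-sided inverse is in force. Once that is set up, chaining the inversion inequality in (ii) and carrying out the one-variable optimization in (iii) are routine, as is the pointwise Young inequality used for (i).
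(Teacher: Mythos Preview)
Your argument is correct. The present paper does not actually give a proof of this lemma; it simply refers the reader to \cite[Lemma~1]{BMM2022}. Your proposal supplies exactly the standard argument one expects there: (i) is the Orlicz--H\"older inequality on the scaled measure space $(B_r,|B_r|^{-\lambda}dx)$ via the pointwise Young inequality, and (ii)--(iii) follow from the closed formula $\|\chi_E\|_{\Psi,\lambda,B_r}=1/\Psi^{-1}(|B_r|^\lambda/|E|)$ together with the inversion inequality $u\le\Phi^{-1}(u)\Phi^{*^{-1}}(u)$ and a monotonicity analysis of $r\mapsto |B_r|^\lambda/|B_r\cap B_t|$. Your care in noting that the infimum in the Luxemburg norm is attained (so one may plug in $\varepsilon=a,b$), that $E=B_r\cap B(x_0,r_0)$ has positive finite measure, and that the minimum in (iii) is actually achieved at $r=t$ (so monotonicity of $\Phi^{-1}$ suffices without continuity) closes the small gaps one might worry about. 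There is nothing to correct.
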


Proof of this lemma can be found in \cite[Lemma 1]{BMM2022}. 

\section{Riesz potential in the central Morrey--Orlicz spaces} \label{sec:Imbed}

We will work with the central Morrey--Orlicz spaces, defined by the Orlicz functions. A function $\Phi \colon [0,\infty) \to [0,\infty)$ 
is called an {\it Orlicz function}, if it is a strictly increasing continuous and convex function with $\Phi(0)=0$.

Let $f\colon {\mathbb R^n} \rightarrow \mathbb R$ be a Lebesgue measurable function and $\alpha \in (0,n)$. 
The Riesz potential is defined as
\begin{equation*}\label{Riesz_potential}
I_\alpha f(x) = \int\limits_{\mathbb{R}^n} \frac{f(y)}{|x-y|^{n-\alpha}} \,dy,\,\, \text{for}\,\,x \in \mathbb{R}^n.
\end{equation*}

The linear operator $I_{\alpha}$ plays an important role in various branches of analysis, including potential theory, 
harmonic analysis, Sobolev spaces, partial differential equations and can be treated as a special singular integral. 
That is why it is important to study its boundedness between different spaces. Many authors investigated 
boundedness of $I_\alpha$ in Morrey, Orlicz and Morrey--Orlicz spaces. We present here our main theorem on 
the boundedness of the Riesz potential in the central Morrey--Orlicz spaces.

\vspace{2mm}

In order to prove our result we will use estimate from \cite{MM15} for the Hardy--Littlewood maximal operator in central 
Morrey--Orlicz spaces. The Hardy--Littlewood maximal operator $M$ or {\it centred maximal function} $Mf$ of a function 
$f$ defined on $\mathbb R^n$ is defined at each $x {\in \mathbb R^n}$ as
$$
Mf(x) = \sup\limits_{r>0} \frac {1} {|B(x,r)|} \int\limits_{B(x,r)} |f(y)| \,dy.
$$
For any Orlicz function $\Phi$ and $0 \leq \lambda \leq 1$, maximal operator $M$ is bounded on 
$M^{\Phi, \lambda}(0)$, provided ${\Phi}^* \in {\Delta}_2$, and then there exists a constant $C_0>1$ such that 
\begin{equation*}  
\|Mf\|_{M^{\Phi, \lambda}(0)} \le C_0 \,  \|f\|_{M^{\Phi, \lambda}(0)},  \quad \text{for all} ~ f \in M^{\Phi, \lambda}(0)
\end{equation*} 
(see \cite[Theorem 6(i)]{MM15}). Moreover, the maximal operator $M$ is bounded from  $M^{\Phi, \lambda}(0)$ to 
$WM^{\Phi, \lambda}(0)$, that is, there exists a constant $c_0>1$ such that $\|Mf\|_{WM^{\Phi, \lambda}(0)} \le c_0 \,  
\|f\|_{M^{\Phi, \lambda}(0)}$ for all $f \in M^{\Phi, \lambda}(0)$ (see \cite[Theorem 6(ii)]{MM15}).

\vspace{2mm}

Furthermore, in the proof of the main result we will use Hedberg's pointwise estimate from \cite[p. 506]{hed}.

\begin{lemma}[Hedberg]\label{lemma2} If $f\colon {\mathbb R^n} \rightarrow \mathbb R$ is a Lebesgue measurable function 
and $\alpha \in (0,n)$, then for all $x \in \mathbb R^n$ and $r > 0$
\begin{equation*}\label{lemma2}
\int_{|y - x| \leq r} |f(y)| |x - y|^{\alpha - n} dy \leq C_H\, r^{\alpha} Mf(x),
\end{equation*}
with $C_H = \frac{2^n}{2^{\alpha} - 1} v_n$, where $v_n = |B(0, 1)| = \pi^{n/2}/\Gamma(n/2+1)$.
\end{lemma}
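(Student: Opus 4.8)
The plan is to carry out the classical dyadic annular decomposition of the ball $\{y : |y-x| \le r\}$ around the point $x$ at which the estimate is to be proved. First I would write, up to a set of measure zero,
\[
\{y : |y-x| \le r\} = \bigcup_{k=0}^{\infty} A_k, \qquad A_k = \{\, y : 2^{-(k+1)} r < |y-x| \le 2^{-k} r \,\},
\]
a disjoint decomposition into spherical shells shrinking toward $x$, and split the integral accordingly:
\[
\int_{|y-x|\le r} |f(y)|\,|x-y|^{\alpha-n}\,dy = \sum_{k=0}^{\infty} \int_{A_k} |f(y)|\,|x-y|^{\alpha-n}\,dy .
\]

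On each shell $A_k$ I would invoke two elementary bounds. Because $\alpha - n < 0$, for $y \in A_k$ one has $|x-y|^{\alpha-n} \le (2^{-(k+1)} r)^{\alpha-n}$, which is the smallest value of $|x-y|^{\alpha-n}$ over the shell. And since $A_k \subset B(x, 2^{-k} r)$, the very definition of the centred maximal function yields
\[
\int_{A_k} |f(y)|\,dy \le \int_{B(x, 2^{-k} r)} |f(y)|\,dy \le |B(x, 2^{-k} r)|\, Mf(x) = v_n\,(2^{-k} r)^n\, Mf(x).
\]
Multiplying these two estimates and collecting the powers of $2$ and of $r$ shows that the $k$-th term is bounded by $v_n\, 2^{\,n-(k+1)\alpha}\, r^{\alpha}\, Mf(x)$; here the only point requiring a little care is the exponent bookkeeping, namely $-(k+1)(\alpha-n) - kn = n - (k+1)\alpha$.

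Finally I would sum the resulting geometric series
\[
\sum_{k=0}^{\infty} 2^{\,n-(k+1)\alpha} = 2^{n}\sum_{k=0}^{\infty} 2^{-(k+1)\alpha} = 2^{n}\cdot\frac{2^{-\alpha}}{1-2^{-\alpha}} = \frac{2^{n}}{2^{\alpha}-1},
\]
which converges precisely because $\alpha > 0$, and conclude that the whole integral is at most $\frac{2^{n}}{2^{\alpha}-1}\, v_n\, r^{\alpha}\, Mf(x) = C_H\, r^{\alpha}\, Mf(x)$, as claimed. I do not expect any genuine obstacle in this argument: it is entirely self-contained, and the sole mild subtlety is tracking the constants so that the geometric factor $2^{n}/(2^{\alpha}-1)$ and the volume constant $v_n = |B(0,1)|$ emerge exactly in the stated form rather than merely up to an unspecified multiplicative constant.
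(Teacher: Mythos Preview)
Your proof is correct and follows essentially the same approach as the paper's: the same dyadic annular decomposition of $\{|y-x|\le r\}$, the same lower bound on $|x-y|$ on each shell, the same use of $A_k \subset B(x,2^{-k}r)$ together with the definition of $Mf$, and the same geometric-series summation yielding the constant $C_H = \dfrac{2^n}{2^\alpha - 1}\,v_n$.
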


\begin{proof} For the sake of completeness, we include its proof, taking care about the constant $C_H$ in the estimate.
For any $x \in \mathbb R^n$ and $r > 0$
\begin{eqnarray*}
\int_{|y - x| \leq r} \frac{|f(y)|}{|x - y|^{n - \alpha}} dy 
&=&
\sum\limits_{m=0}^{\infty} \int_{r 2^{-m-1} < | y - x| \leq r 2^{-m}}  \frac{|f(y)|}{|x - y|^{n - \alpha}} dy \\
&\leq &
\sum\limits_{m=0}^{\infty} \int_{B(x, r 2^{-m}) \setminus B(x, r 2^{-m-1})}  \frac{|f(y)|}{(r 2^{-m-1})^{n - \alpha}} dy\\
&\leq&
2^{n - \alpha} r^{\alpha} \sum\limits_{m=0}^{\infty} 2^{-m \alpha} (r 2^{-m})^{-n}  \int_{B(x, r 2^{-m})} |f(y)| dy\\
&=&
2^{n - \alpha} r^{\alpha} \sum\limits_{m=0}^{\infty} 2^{-m \alpha} \frac{v_n}{| B(x, r 2^{-m})|}  \int_{B(x, r 2^{-m})} |f(y)| dy\\
&\leq&
2^{n - \alpha} r^{\alpha} \,v_n \sum\limits_{m=0}^{\infty} 2^{-m \alpha} Mf(x) = \frac{2^n \,v_n}{2^{\alpha} - 1} \, r^{\alpha} Mf(x).
\end{eqnarray*}
\end{proof}

\begin{theorem}\label{th_bound}
Let $0<\alpha<n,$ $\Phi, \Psi$ be Orlicz functions and either $0 < \lambda, \mu <1, \lambda \neq \mu$ or $\lambda=0$ 
and $0 \leq \mu < 1.$ Assume that there exist constants $C_1, C_2 \ge 1$ such that 
\begin{equation} \label{Phi14}
\int_u^{\infty} t^{\frac{\alpha}{n}} \, \Phi^{-1}(t^{\lambda-1}) \, \frac{dt}{t} \leq C_1\, \Psi^{-1}(u^{\mu-1}) \quad \text{for all} \ u > 0 
\end{equation}
and
\begin{equation} \label{Orlicz15-2} 
u^\frac{\alpha}{n} \Phi^{-1}(\frac{r^\lambda}{u})+\int_u^{r} t^\frac{\alpha}{n} \, \Phi^{-1} (\frac{r^\lambda}{t})\, \frac{dt}{t} 
\leq C_2 \,\Psi^{-1}(\frac{r^\mu}{u}) 
\quad \text{for all} \  r> u > 0. 
\end{equation}
\begin{itemize}
\item[$({\rm i})$] If ${\Phi}^* \in {\Delta}_2$, then $I_{\alpha}$ is bounded from $M^{\Phi, \lambda}(0)$ to $M^{\Psi, \mu}(0)$, 
that is, there exists a constant $C_3 = C_3(n, C_0, C_H, C_1, C_2) \geq 1$ such that $\| I_{\alpha} f\|_{M^{\Psi, \mu}(0)} \leq C_3 \,  \|f\|_{M^{\Phi, \lambda}(0)}$ 
for all $f \in M^{\Phi, \lambda}(0)$. 
\item[$({\rm ii})$] The operator $I_{\alpha}$ is bounded from $M^{\Phi, \lambda}(0)$ to $WM^{\Psi, \mu}(0)$, that is, 
there exists a constant $c_3 = c_3 (n, c_0, C_H, C_1, C_2) \geq 1$ such that $\| I_{\alpha} f\|_{WM^{\Psi, \mu}(0)} 
\leq c_3\,  \|f\|_{M^{\Phi, \lambda}(0)}$ for all $f \in M^{\Phi, \lambda}(0)$. 
\end{itemize}
\end{theorem}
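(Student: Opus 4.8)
The plan is to fix $r>0$, normalise so that $\|f\|_{M^{\Phi,\lambda}(0)}=1$ by homogeneity, and bound $\|I_\alpha f\|_{\Psi,\mu,B_r}$ (resp.\ $\|I_\alpha f\|_{\Psi,\mu,B_r,\infty}$) by a constant independent of $r$; taking $\sup_{r>0}$ then gives (i) (resp.\ (ii)). The first step is a pointwise estimate, obtained by splitting the integral defining $I_\alpha f(x)$ at $|y|=2|x|$. Since $\{|y|\le 2|x|\}\subset B(x,3|x|)$, Hedberg's Lemma \ref{lemma2} bounds that part by $C_H3^{\alpha}|x|^{\alpha}Mf(x)$. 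On $\{|y|>2|x|\}$ one has $|x-y|\ge|y|/2$; decomposing this set into the dyadic annuli $B_{2^{j+2}|x|}\setminus B_{2^{j+1}|x|}$, estimating the $j$‑th piece by $\ls (2^{j+1}|x|)^{\alpha-n}\int_{B_{2^{j+2}|x|}}|f|$ and then, by Hölder's inequality (Lemma \ref{lemma1}(i)) together with Lemma \ref{lemma1}(ii), by $\ls(2^{j+1}|x|)^{\alpha}\Phi^{-1}(|B_{2^{j+1}|x|}|^{\lambda-1})$, summing the series and comparing it (substitution $t=|B_\rho|$) with the integral in \eqref{Phi14}, assumption \eqref{Phi14} yields the bound $\ls\Psi^{-1}(|B_{|x|}|^{\mu-1})$. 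Altogether,
\[
|I_\alpha f(x)|\ \ls\ |x|^{\alpha}Mf(x)+\Psi^{-1}\!\big(|B_{|x|}|^{\mu-1}\big),\qquad\text{a.e. }x\ne0 .
\]

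When $\mu>0$ the second summand causes no trouble: in polar coordinates $\int_{B_r}\Psi\big(\Psi^{-1}(|B_{|x|}|^{\mu-1})\big)\,dx=\int_{B_r}|B_{|x|}|^{\mu-1}\,dx=\mu^{-1}|B_r|^{\mu}$, so by convexity of $\Psi$ its $\|\cdot\|_{\Psi,\mu,B_r}$–norm is $\le\mu^{-1}$ (and its weak norm is even smaller). In the one remaining case $\lambda=\mu=0$ — where $M^{\Phi,0}(0)=L^{\Phi}$ and $M^{\Psi,0}(0)=L^{\Psi}$ — the above polar integral diverges, and one argues instead by the classical optimisation in Hedberg's estimate: split at $|x-y|=\delta$, bound the tail by Hölder in the $L^{\Phi}$–$L^{\Phi^{*}}$ duality, and optimise in $\delta$; assumptions \eqref{Phi14}–\eqref{Orlicz15-2} with $\lambda=\mu=0$ are precisely what is needed and reduce to the known criterion for $I_\alpha\colon L^{\Phi}\to L^{\Psi}$.

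Thus everything reduces to showing $\big\|\,|x|^{\alpha}Mf\,\big\|_{\Psi,\mu,B_r}\ls\|f\|_{M^{\Phi,\lambda}(0)}$, and this is where \eqref{Orlicz15-2} and the boundedness of $M$ enter; it is the crux of the argument. Put $h=Mf$; for (i), $\Phi^{*}\in\Delta_2$ and \cite[Theorem 6(i)]{MM15} give $\|h\|_{M^{\Phi,\lambda}(0)}\le C_0$, hence $\int_{B_\rho}\Phi(h/C_0)\le|B_\rho|^{\lambda}$ for all $\rho$. Applying $\Psi$ to the first term of \eqref{Orlicz15-2} and substituting $v=\Phi^{-1}(r^{\lambda}/u)$ converts it into the equivalent inequality
\[
\Psi\!\left(\frac{r^{\lambda\alpha/n}\,v}{C_2\,\Phi(v)^{\alpha/n}}\right)\ \le\ r^{\mu-\lambda}\,\Phi(v)\qquad\text{for }r>0,\ v>\Phi^{-1}(r^{\lambda-1}).
\]
Decompose $B_r$ into the dyadic annuli $A_k=B_{\rho_k}\setminus B_{\rho_{k+1}}$, $\rho_k=2^{-k}r$, so $|x|\le\rho_k$ on $A_k$, and in the last display choose the free parameter $r=r(x):=\big(\kappa\,\rho_k^{\,n}\,\Phi(h(x)/C_0)\big)^{1/\lambda}$ (possible since $\lambda>0$), with $\kappa$ chosen so that the argument on the left equals $\rho_k^{\alpha}h(x)/\varepsilon$. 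This gives, on $\{x\in A_k:\ h(x)\gtrsim\Phi^{-1}(|B_{\rho_k}|^{\lambda-1})\}$, the bound $\Psi(\rho_k^{\alpha}h(x)/\varepsilon)\ls\rho_k^{\,n(\mu-\lambda)/\lambda}\,\Phi(h(x)/C_0)^{\mu/\lambda}$, and on its complement — where $\rho_k^{\alpha}h(x)\ls|B_{\rho_k}|^{\alpha/n}\Phi^{-1}(|B_{\rho_k}|^{\lambda-1})$ — the boundary case of the same inequality gives $\Psi(\rho_k^{\alpha}h(x)/\varepsilon)\le|B_{\rho_k}|^{\mu-1}$. Integrating over $A_k\subset B_{\rho_k}$ against $\int_{B_{\rho_k}}\Phi(h/C_0)\le|B_{\rho_k}|^{\lambda}$ — using Jensen's inequality for the concave power $t\mapsto t^{\mu/\lambda}$ when $\mu<\lambda$, and the integral term of \eqref{Orlicz15-2} with the dual Hölder step when $\mu>\lambda$ — one finds, after the exponents cancel to give exactly $\mu$ (this is why $\lambda\ne\mu$ is imposed), that the $k$‑th contribution to $\int_{B_r}\Psi(|x|^{\alpha}Mf/\varepsilon)\,dx$ is $\ls|B_{\rho_k}|^{\mu}=|B_r|^{\mu}2^{-kn\mu}$. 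Summing the geometric series ($\mu>0$) and rescaling $\varepsilon$ by the resulting constant via convexity yields $\int_{B_r}\Psi(|x|^{\alpha}Mf/\varepsilon)\,dx\le|B_r|^{\mu}$, i.e.\ $\big\|\,|x|^{\alpha}Mf\,\big\|_{\Psi,\mu,B_r}\ls1$, and with the previous paragraph this proves (i). For (ii) one replaces $\Phi^{*}\in\Delta_2$ and the strong bound on $M$ by the weak bound $\|Mf\|_{WM^{\Phi,\lambda}(0)}\le c_0\|f\|_{M^{\Phi,\lambda}(0)}$ of \cite[Theorem 6(ii)]{MM15} and repeats the same level‑set computation with the distribution function $d(|x|^{\alpha}Mf\cdot\chi_{B_r},\cdot)$ in place of the Lebesgue integral, again invoking \eqref{Orlicz15-2} to pass from $|x|^{\alpha}Mf$ to the $(\Psi,\mu)$–scale (the weak quasi‑norm is subadditive up to a constant and is dominated by the strong norm on the harmless term).

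The main obstacle will be exactly this estimate $\big\|\,|x|^{\alpha}Mf\,\big\|_{\Psi,\mu,B_r}\ls\|f\|_{M^{\Phi,\lambda}(0)}$: turning the crude pointwise bound $|x|^{\alpha}Mf(x)$ into control on the $(\Psi,\mu)$–scale forces the reformulation of \eqref{Orlicz15-2} above, the dyadic decomposition of $B_r$ around the origin, the separate treatment of the two regimes $\mu<\lambda$ and $\mu>\lambda$, and careful bookkeeping so that every constant stays independent of $r$; by comparison, the far‑field tail (handled through \eqref{Phi14}) and the use of the maximal‑function bounds are routine.
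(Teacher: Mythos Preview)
Your far–field estimate via the dyadic shells and assumption \eqref{Phi14}, and the observation that $\int_{B_r}|B_{|x|}|^{\mu-1}\,dx=\mu^{-1}|B_r|^{\mu}$ for $\mu>0$, are fine. The difficulty is entirely in the near part, and your treatment of $\big\|\,|x|^{\alpha}Mf\,\big\|_{\Psi,\mu,B_r}$ has a real gap.

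By splitting at $|y|=2|x|$ you have \emph{fixed} the Hedberg radius as (a multiple of) $|x|$, thereby discarding the optimization that is the whole point of Hedberg's trick. Your dyadic/level‑set argument then tries to compensate, but it breaks down precisely where the paper's approach works uniformly. Concretely: your choice $r(x)=(\kappa\rho_k^{\,n}\Phi(h/C_0))^{1/\lambda}$ is undefined when $\lambda=0$, so the allowed case $\lambda=0,\ 0<\mu<1$ is not covered. More seriously, on the ``large $h$'' set you arrive at $\Psi(\rho_k^{\alpha}h/\varepsilon)\lesssim \rho_k^{\,n(\mu-\lambda)/\lambda}\,\Phi(h/C_0)^{\mu/\lambda}$, and for $\mu>\lambda$ the power $\mu/\lambda>1$ is \emph{convex}, so Jensen goes the wrong way; the only information available is $\int_{B_{\rho_k}}\Phi(h/C_0)\le|B_{\rho_k}|^{\lambda}$, which does not control $\int\Phi(h/C_0)^{\mu/\lambda}$. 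The phrase ``the integral term of \eqref{Orlicz15-2} with the dual H\"older step'' does not repair this: that integral term is a tail estimate in the $t$‑variable and gives no extra integrability of $\Phi(Mf/C_0)$ over $B_r$.

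The paper avoids all of this by keeping the Hedberg optimization. Fixing $r>0$, one splits $B_r$ into $B_r^1=\{x:\Phi(Mf(x)/C_0)\le|B_r|^{\lambda-1}\}$ and its complement $B_r^2$. On $B_r^1$ one splits $I_\alpha f$ at $|y|=2r$ (not $2|x|$) and both pieces are bounded by a constant times $\Psi^{-1}(|B_{2r}|^{\mu-1})$, so $\int_{B_r^1}\Psi(\cdot)\le|B_r|^{\mu}$. On $B_r^2$ one splits at $|x-y|=\delta(x)$ where $\delta$ is \emph{defined} by $\Phi(Mf(x)/C_0)=|B_r|^{\lambda}/|B_\delta|$; then $|B_\delta|<|B_r|$, and the first term of \eqref{Orlicz15-2} with $u=|B_\delta|$, $r\mapsto|B_r|$ gives $|B_\delta|^{\alpha/n}Mf(x)\lesssim\Psi^{-1}(|B_r|^{\mu}/|B_\delta|)$, hence
\[
\Psi\!\left(\frac{|I_\alpha f(x)|}{C}\right)\ \le\ \frac{|B_r|^{\mu}}{|B_\delta|}\ =\ |B_r|^{\mu-\lambda}\,\Phi\!\left(\frac{Mf(x)}{C_0}\right),
\]
whose integral over $B_r$ is $\le|B_r|^{\mu-\lambda}\cdot|B_r|^{\lambda}=|B_r|^{\mu}$, \emph{regardless of the sign of $\mu-\lambda$ and with no division by $\lambda$}. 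The far piece $I_4f$ (split at $|x-y|=\delta$) is handled by the same dyadic/H\"older computation you used, together with both \eqref{Phi14} and the integral term of \eqref{Orlicz15-2}. This is the missing idea: let $\delta$ depend on $Mf(x)$ through \eqref{Orlicz15-2}, not on $|x|$; then no Jensen step and no case distinction $\mu\lessgtr\lambda$ are needed, and $\lambda=0$ causes no trouble.
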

In our earlier paper \cite[Theorem 3]{BMM2022} it was proved result under conditions \eqref{Phi14} and \eqref{Orlicz15}, 
and the latter means that
\begin{equation} \label{Orlicz15} 
\int_u^{\infty} t^\frac{\alpha}{n} \, \Phi^{-1} (\frac{r^\lambda}{t})\, \frac{dt}{t} \leq C_4 \,\Psi^{-1}(\frac{r^\mu}{u}) 
\quad \text{for all} \  u > 0,\,\,r>0. 
\end{equation}
The condition \eqref{Orlicz15} is stronger than the assumption \eqref{Orlicz15-2} because
\begin{eqnarray*}
\int_u^{\infty} t^\frac{\alpha}{n} \, \Phi^{-1} (\frac{r^\lambda}{t})\, \frac{dt}{t} &\ge& \int_u^{2u} t^\frac{\alpha}{n} \, \Phi^{-1} 
(\frac{r^\lambda}{t})\, \frac{dt}{t} \ge u^\frac{\alpha}{n} \int_u^{2u} \Phi^{-1} (\frac{r^\lambda}{2u})\, \frac{dt}{t}\\
&\ge& \frac{u^\frac{\alpha}{n}}{2} \int_u^{2u}\Phi^{-1}(\frac{r^\lambda}{u})\frac{dt}{t} = \frac{\ln{2}}{2}u^\frac{\alpha}{n}\Phi^{-1}(\frac{r^\lambda}{u})
\end{eqnarray*}
and clearly the integral in \eqref{Orlicz15-2} is smaller than the integral in \eqref{Orlicz15}. This improvement provides us with larger 
classes of Orlicz functions $\Phi$ and $\Psi,$ defining central Morrey--Orlicz spaces where the operator $I_\alpha$ is bounded. 

In the simplest case, when $\Phi (u) = u^p, \Psi (u) = u^q$ where $1 < p < q < \infty$, then the convergence of the integral 
in (\ref{Phi14}) means $p < \frac{n (1 - \lambda)}{\alpha}$ and the assumption itself gives equality 
$\frac{\alpha}{n} + \frac{\lambda - 1}{p} = \frac{\mu - 1}{q}$. Assumptions (\ref{Orlicz15-2}) and (\ref{Orlicz15}) are both 
equivalent and give the following equations: $\frac{1}{q} = \frac{1}{p} - \frac{\alpha}{n}$ and $\frac{\lambda}{p} = \frac{\mu}{q}$.
Of course, with the above assumptions, the operator $I_{\alpha}$ is bounded from $M^{p, \lambda}(0)$ to $M^{q, \mu}(0)$. 

Only later, on the Examples \ref{ex2} and \ref{ex3}, we will see that the conditions \eqref {Phi14} and \eqref {Orlicz15-2} hold but 
estimate \eqref {Orlicz15} fails, which shows that our Theorem 1 improves Theorem 3 in \cite {BMM2022}.

\vspace{2mm}
Let us comment on what we can get when the numbers $\lambda$ and $\mu$ come from ``boundaries".

\begin{remark}
If $\lambda = \mu = 0$ we come to the same conclusion as in \cite[Remark 4]{BMM2022}, that is, condition \eqref{Phi14} is sufficient for 
the boundedness of $I_\alpha$ from Orlicz space $L^\Phi(\mathbb{R}^n)$ to weak Orlicz space $WL^\Psi(\mathbb{R}^n).$ If, in addition 
$\Phi^{\ast} \in \Delta_2$, then $I_\alpha$ is bounded from $L^\Phi(\mathbb{R}^n)$ to $L^\Psi(\mathbb{R}^n)$. Note that in this case 
condition \eqref{Orlicz15-2} follows from \eqref{Phi14}.
\end{remark}

\begin{remark}
If $\lambda = 0$ and $0 < \mu < 1$, then the condition \eqref{Orlicz15} is not satisfied, as we already mentioned in \cite[Remark 3]{BMM2022} 
and therefore the result proved in \cite{BMM2022} does not include boundedness of the Riesz potential in this case. 
On the other hand, in this case, assumption \eqref{Phi14} is stronger than \eqref{Orlicz15-2}. Indeed, 
\begin{eqnarray*}
u^\frac{\alpha}{n} \Phi^{-1}(\frac1{u}) &\le & 4 \int\limits_u^\infty t^\frac{\alpha}{n} \Phi^{-1}\bigl(\frac 1t\bigr)\frac{dt}{t} 
\le 4\,C_1 \Psi^{-1}(u^{\mu-1}) \le 4\,C_1 \Psi^{-1}(\frac{r^\mu}{u}) \quad \text{for all}\,\,\, r>u>0.
\end{eqnarray*}
Therefore, if \eqref{Phi14} holds, then $I_\alpha$ is bounded from $L^\Phi (\mathbb{R}^n)$ to $M^{\Psi,\mu}(0).$ 
In particular, when $\Phi(u) = u^p,~\Psi(u) = u^q$, $0<\frac{\alpha}{n}<\frac1p$ and $\frac{\alpha}{n}-\frac1p = \frac{\mu-1}{q}$,  
then \eqref{Phi14} holds. In fact, for all $u > 0$
$$
\int_u^\infty t^\frac{\alpha}{n} \Phi^{-1}(\frac1{t}) \frac{dt}{t} = \int\limits_u^\infty t^{\frac{\alpha}{n}-\frac 1p-1}\,dt 
= \frac1{\frac1p - \frac{\alpha}{n}} u^{\frac{\alpha}{n}-\frac 1p} = \frac{q}{1-\mu} u^\frac{\mu-1}{q} = C_1 \Psi^{-1}(u^{\mu-1}), 
$$
and we obtain \eqref{Phi14} with $C_1 = \frac{q}{1-\mu}.$ Thus, from Theorem~\ref{th_bound} we get that $I_\alpha$ 
is bounded from $L^p(\mathbb{R}^n)$ to $M^{q,\mu}(0)$. This result, in particular, was proved in \cite[Theorem 2]{BGG2007}. 
\end{remark}

\begin{remark}\label{remark3}
If $0 < \lambda < 1$ and $\mu = 0$, the conditions \eqref{Orlicz15-2} and \eqref{Orlicz15} are not satisfied. Additionally, $I_\alpha$ 
is not bounded from $M^{\Phi, \lambda}(0)$ to $L^\Psi(\mathbb{R}^n)$ by applying the necessary condition for boundedness 
of $I_\alpha$ given in \cite[Theorem 2 (ii)]{BMM2022}. 
In fact, let $R \ge 1,$ $x_R = (R, 0,..., 0) \in \mathbb{R}^n$ and $f_R(x) = \chi_{B(x_R,1)}(x).$ Following the same arguments 
as in \cite[Proposition 1]{KS17} and in \cite[Theorem 2 (ii)]{BMM2022} we obtain that 
$$
\|f_R\|_{M^{\Phi,\lambda}(0)} \le \frac1{\Phi^{-1}(\frac{v_n^\lambda}{2^n v_{n-1}}R^{\lambda n})} ~~ {\rm and} ~~ 
\|I_\alpha f_R\|_{L^\Psi(\mathbb{R}^n)} \ge \frac{2^{\alpha-n}v_n}{\Psi^{-1}(\frac1{v_n})}.
$$
Thus, 
\begin{eqnarray*}
\liminf\limits_{R\rightarrow \infty} \frac{\|I_\alpha f_R\|_{L^\Psi(\mathbb{R}^n)}}{\|f_R\|_{M^{\Phi,\lambda}(0)}} 
&\ge& 
\frac{2^{\alpha-n} v_n}{\Psi^{-1}(\frac1{v_n})} \liminf\limits_{R\rightarrow \infty} \Phi^{-1}(\frac{v_n^\lambda}{2^n v_{n-1}}R^{\lambda n}) \\
&\ge &
\frac{2^{\alpha-n} v_n}{\Psi^{-1}(\frac1{v_n})} \min(1, \frac{v_n^\lambda}{2^n v_{n-1}}) 
\liminf\limits_{R\rightarrow \infty} \Phi^{-1}(R^{\lambda n}) = \infty,
\end{eqnarray*}
and therefore $I_\alpha$ is not bounded from $M^{\Phi, \lambda}(0)$ to $L^\Psi(\mathbb{R}^n).$
\end{remark}

\begin{remark}\label{remark4}
If $0 < \lambda = \mu < 1$, then the assumption \eqref{Orlicz15-2} does not hold. Moreover, if either 
$a = \liminf\limits_{t \rightarrow 0^+} \frac{\Phi^{-1}(t)}{\Psi^{-1}(t)} > 0$ or $b = \liminf\limits_{t \rightarrow \infty} 
\frac{\Phi^{-1}(t)}{\Psi^{-1}(t)} = \infty$, then by Theorem 2 in \cite{BMM2022} the Riesz potential $I_{\alpha}$ 
is not bounded from $M^{\Phi, \lambda}(0)$ to $M^{\Psi, \lambda}(0)$. In particular, $I_{\alpha}$ is not bounded 
from $M^{p, \lambda}(0)$ to $M^{q, \lambda}(0)$ for any $1 \leq p, q < \infty$ (see also \cite{KS17}).
There remains an unresolved case when $a = 0$ and $b < \infty$.
\end{remark}

\begin{proof} [Proof of Theorem \ref{th_bound}](i) For any $x \in B_r$ and $f \in M^{\Phi, \lambda} (0)$ we consider two disjoint subsets
\begin{equation*}
B_r^1 = \{x \in B_r \colon \Phi\left(\frac{Mf(x)}{C_0 \|f\|_{M^{\Phi, \lambda}(0)}}\right) \le |B_r|^{\lambda - 1}\},
\end{equation*}
and
\begin{equation*}
B_r^2 = \{ x \in B_r \colon \Phi\left(\frac{Mf(x)}{C_0 \|f\|_{M^{\Phi, \lambda}(0)}}\right) > |B_r|^{\lambda - 1}\}.
\end{equation*}
We estimate the Riesz potential $I_\alpha f(x)$ by a sum of two integrals
\begin{eqnarray*}
|I_\alpha f(x)|  &\le & \int_{|y| \le 2r} |f(y)||x-y|^{\alpha - n} \,dy + \int_{|y|>2r}|f(y)||x-y|^{\alpha - n} \,dy\\
 & =: &I_1f(x)+I_2f(x).
\end{eqnarray*}
For $x \in B_r^1$ and $|y| \leq 2r$ we have $|y - x| \leq |y| +|x| \leq 3r$, and so
\begin{equation*}
I_1 f(x) = \int_{|y| \le 2r} |f(y)||x-y|^{\alpha - n} \,dy \leq \int_{|y - x| \le 3 r} |f(y)||x-y|^{\alpha - n} \,dy.
\end{equation*}
By Hedberg's pointwise estimate, given in Lemma \ref{lemma2}, we obtain
\begin{equation*}
I_1f(x) \leq C_5\, |B_r|^{\frac {\alpha} n} Mf(x), ~ {\rm where} ~ C_5 = C_H \, 3^{\alpha} \,v_n^{-\alpha/n}.
\end{equation*}
This implies, for $x \in B_r^1$, that
\begin{equation*}
I_1f(x) \le C_0\, C_5 \|f\|_{M^{\Phi, \lambda}(0)} |B_r|^{\frac {\alpha} n} \Phi^{-1}(|B_r|^{\lambda-1}).
\end{equation*}
On the other hand,
\begin{align*}
\int\limits_u^\infty t^{\frac{\alpha}{n}} \Phi^{-1} (t^{\lambda-1}) \frac{dt}{t} &\ge 
\int\limits_u^{2u} t^{\frac{\alpha}{n}} \Phi^{-1} (t^{\lambda-1}) \frac{dt}{t} \ge \ln{2}~ u^{\frac{\alpha}{n}} \Phi^{-1}((2u)^{\lambda-1})\\ &\ge 
2^{\lambda-1}\ln 2~u^{\frac{\alpha}{n}} \Phi^{-1} (u^{\lambda-1}) \ge \frac{1}{4} \, u^{\frac{\alpha}{n}} \Phi^{-1} (u^{\lambda-1}),
\end{align*}
for any $u > 0$. Thus, applying assumption \eqref{Phi14} we obtain
\begin{eqnarray*}
I_1 f(x) &\le& 4 \, C_0\, C_1\,C_5 \|f\|_{M^{\Phi, \lambda}(0)} \Psi^{-1}(|B_r|^{\mu-1})\\
&\le& \frac{4}{2^{n(\mu-1)}} \,C_0\, C_1\,C_5 \|f\|_{M^{\Phi, \lambda}(0)} \Psi^{-1}(|B_{2r}|^{\mu-1})\\
&\le&
 4 \cdot 2^n \cdot C_0\, C_1\,C_5 \, \|f\|_{M^{\Phi, \lambda}(0)} \Psi^{-1}(|B_{2r}|^{\mu-1}).
\end{eqnarray*}

To estimate the second integral $I_2 f(x) $, first note that when $x \in B_r^1$ and $|y| > 2r$ we have 
$|x| < r < |y|/2$ and $|y - x| \ge |y| - |x| > |y|/2$, and so $|x - y|^{\alpha - n} < 2^{n - \alpha} |y|^{\alpha - n}$. 
Thus, following Hedberg's method, as in \cite[pp. 18--20]{BMM2022}, we obtain 
\begin{eqnarray*}
I_2 f(x) &\le& 2^{n-\alpha} \int_{|y|>2r} |f(y)||y|^{\alpha - n}\,dy = 
2^{n-\alpha} \sum\limits_{k = 0}^\infty \, \int\limits_{2^k\cdot2r<|y|\le2^{k+1}\cdot2r} |f(y)||y|^{\alpha - n}\,dy \\
&\le & 2^{n-\alpha} \sum\limits_{k = 0}^\infty (2^{k+1} r)^{\alpha-n}\int_{|y| \le 2^{k+2}r} |f(y)|\,dy.
\end{eqnarray*}
Then, from Lemma \ref{lemma1}, it follows that 
\begin{eqnarray*}
I_2 f(x) &\le &  2^{n-\alpha+1} \sum\limits_{k = 0}^\infty (2^{k+1}r)^{\alpha-n} |B_{2^{k+2}r}|^\lambda \, 
\|f\|_{\Phi,\lambda,B_{2^{k+2}r}} \|\chi_{B_{2^{k+2}r}}\|_{\Phi^\ast, \lambda,B_{2^{k+2}r}}\\
&\le & 2^{n-\alpha+1} \|f\|_{M^{\Phi,\lambda}(0)} \sum\limits_{k = 0}^\infty (2^{k+1}r)^{\alpha-n} 
|B_{2^{k+2}r}|^\lambda\, \frac{\Phi^{-1}(|B_{2^{k+2}r}|^{\lambda-1})}{|B_{2^{k+2}r}|^{\lambda-1}}\\
&=& 2^{2n-\alpha+1} v_n \,\|f\|_{M^{\Phi,\lambda}(0)} \sum\limits_{k = 0}^\infty (2^{k+1}r)^{\alpha} \, 
\Phi^{-1}(|B_{2^{k+2}r}|^{\lambda-1})\\
&=&\frac{2^{2n-\alpha+1} v_n^{1-\frac{\alpha}{n}}}{n\ln{2}} \|f\|_{M^{\Phi,\lambda}(0)} 
\sum\limits_{k = 0}^\infty |B_{2^{k+1}r}|^\frac{\alpha}{n} \Phi^{-1}(|B_{2^{k+2}r}|^{\lambda-1}) 
\int\limits_{|B_{2^{k+1}r}|}^{|B_{2^{k+2}r}|} \frac{dt}{t}\\
&\le& 2^{2n-\alpha+2} v_n^{1-\frac{\alpha}{n}} \, \|f\|_{M^{\Phi,\lambda}(0)} \sum\limits_{k = 0}^\infty 
\int\limits_{|B_{2^{k+1}r}|}^{|B_{2^{k+2}r}|} t^{\frac{\alpha}{n}} \Phi^{-1}(t^{\lambda-1})\frac{dt}{t}\\
&\le& C_6 \, \|f\|_{M^{\Phi,\lambda}(0)} \int\limits_{|B_{2r}|}^\infty t^\frac{\alpha}{n}\Phi^{-1}(t^{\lambda-1})\frac{dt}{t},
~ {\rm where} ~ C_6 = 2^{2n-\alpha+2} v_n^{1-\frac{\alpha}{n}}.
\end{eqnarray*}
Applying assumption \eqref{Phi14} we get
$$
I_2 f(x) \le C_1 \, C_6 \, \|f\|_{M^{\Phi,\lambda}(0)} \Psi^{-1} (|B_{2r}|^{\mu-1}). 
$$
Thus, for $x \in B_r^1$, we obtain
$$
|I_\alpha f(x)| \le I_1 f(x) + I_2 f(x) \le 2 \, C_7 \, \|f\|_{M^{\Phi,\lambda}(0)}\Psi^{-1} (|B_{2r}|^{\mu-1}),
$$
where $C_7 = C_1\cdot \max \{4 \cdot 2^n \cdot C_0\, C_5, C_6\}$. Since $2^{n(\mu-1)}<1$ it follows that 
\begin{eqnarray*}
\int_{B_r^1}\Psi\left(\frac{|I_\alpha f(x)|}{2 \,C_7\, \|f\|_{M^{\Phi,\lambda}(0)}}\right)\,dx \le |B_r^1| |B_{2r}|^{\mu-1}\le2^{n(\mu-1)}|B_r|^\mu<|B_r|^\mu.
\end{eqnarray*}

Let now $x \in B_r^2.$ We can write $I_\alpha f(x)$ as follows
\begin{eqnarray*}
|I_\alpha f(x)|  &\le & \int_{|x-y| \le \delta} |f(y)||x-y|^{\alpha - n} \,dy + \int_{|x-y|>\delta}|f(y)||x-y|^{\alpha - n} \,dy\\
 & =: &I_3f(x)+I_4f(x),
\end{eqnarray*}
where $\delta$ is defined in the following way
\begin{equation}\label{delta}
\Phi\left(\frac{Mf(x)}{C_0\|f\|_{M^{\Phi,\lambda}(0)}}\right) = \frac{|B_r|^\lambda}{|B_\delta|}.
\end{equation} 
Since $x \in B_r^2$ it follows that $|B_\delta|<|B_r|$.
Hedberg's pointwise estimate from Lemma 2 to $ I_3 f (x) $ gives
$$
I_3 f(x) \le C_H \, \delta^{\alpha} Mf(x) = C_H \, (\delta^n v_n)^{\alpha/n} \,v_n^{-\alpha/n} Mf(x) 
= v_n^{-\alpha/n}\,C_H \, |B_\delta|^\frac{\alpha}{n} Mf(x), 
$$
and from the assumption \eqref{Orlicz15-2} we get
$$
I_3 f(x) \le v_n^{-\alpha/n}\,C_2\,C_H \frac{\Psi^{-1}(\frac{|B_r|^\mu}{|B_\delta|})}{\Phi^{-1}(\frac{|B_r|^\lambda}{|B_\delta|})} Mf(x).
$$
Next, since equality \eqref{delta} holds it follows that 
$$
I_3 f(x) \le v_n^{-\alpha/n}\,C_0\,C_2\,C_H \, \|f\|_{M^{\Phi,\lambda}(0)}\Psi^{-1}(\frac{|B_r|^\mu}{|B_\delta|}).
$$
Applying again Hedberg's method for $I_4 f(x)$ we obtain
\begin{eqnarray*}
I_4 f(x) &=& \int_{|x-y|>\delta}|f(y)||x-y|^{\alpha-n}\,dy 
= \sum\limits_{k =0}^{\infty} \, \int\limits_{2^k\delta<|x-y|\le2^{k+1}\delta}|f(y)||x-y|^{\alpha-n}\,dy\\
&\le& \sum\limits_{k =0}^{\infty}(2^k\delta)^{\alpha-n}\int\limits_{|x-y|\le2^{k+1}\delta}|f(y)|\,dy \\
&\le& \sum\limits_{k =0}^{\infty}(2^k\delta)^{\alpha-n}\int\limits_{B_{|x|+2^{k+1}\delta}}|f(y)|\chi_{B(x,2^{k+1}\delta)}(y)\,dy,
\end{eqnarray*}
where $B_{|x|+2^{k+1}\delta}$ is the smallest ball with the centre at origin containing $B(x,2^{k+1}\delta).$
From Lemma \ref{lemma1}, using the fact that $B_{|x|+2^{k+1}\delta} \cap B(x,2^{k+1}\delta) = B(x,2^{k+1}\delta)$, we get
\begin{eqnarray*}
I_4 f(x) &\le& 2 \sum\limits_{k =0}^{\infty}(2^k\delta)^{\alpha-n} |B_{|x|+2^{k+1}\delta}|^\lambda 
\|f\|_{\Phi,\lambda,B_{|x|+2^{k+1}\delta}} \| \chi_{B(x,2^{k+1}\delta)}\|_{\Phi^\ast, \lambda,B_{|x|+2^{k+1}\delta}}\\
&\le& 2 \, \|f\|_{M^{\Phi,\lambda}(0)} \sum\limits_{k =0}^{\infty}(2^k\delta)^{\alpha-n} \,|B(x,2^{k+1}\delta)| \, 
\Phi^{-1}\left(\frac{|B_{|x|+2^{k+1}\delta}|^\lambda}{|B(x,2^{k+1}\delta)|}\right)\\
&=& \frac{2^{n+1}v_n}{n\ln{2}} \, \|f\|_{M^{\Phi,\lambda}(0)} \sum\limits_{k =0}^{\infty}(2^k\delta)^{\alpha} \,
\Phi^{-1}\left(\frac{|B_{|x|+2^{k+1}\delta}|^\lambda}{|B(x,2^{k+1}\delta)|}\right)\int\limits_{|B(x,2^k\delta)|}^{|B(x,2^{k+1}\delta)|}\frac{dt}{t}.
\end{eqnarray*}
Since $|x| \le r$ and $2^k\delta \le (\frac{t}{v_n})^\frac1n\le2^{k+1}\delta$ it follows that
\begin{eqnarray*}
|B_{|x|+2^{k+1}\delta}|&\le&v_n(r+2^{k+1}\delta)^n \le v_n\left(r+2\frac{t^\frac1n}{v_n^\frac1n}\right)^n = (v_n^\frac1nr+2t^\frac1n)^n\\
&=&(|B_r|^\frac1n+2t^\frac1n)^n \le 2^n (|B_r|^\frac1n+t^\frac1n)^n \le 4^n \max\{|B_r|,t\}.
\end{eqnarray*}
So using the concavity of $\Phi^{-1}$, we get
\begin{eqnarray*}
I_4 f(x) &\le& \frac{2^{n+1}v_n^{1-\frac{\alpha}{n}}}{n\ln{2}} \, \|f\|_{M^{\Phi,\lambda}(0)} 
\sum\limits_{k =0}^{\infty}\int\limits_{|B(x,2^k\delta)|}^{|B(x,2^{k+1}\delta)|}t^\frac{\alpha}{n} 
\Phi^{-1}\left(\frac{4^{\lambda n}(\max\{|B_r|,t\})^\lambda}{t}\right)\frac{dt}{t}\\
&\le& \frac{4^{\lambda n}2^{n+1}v_n^{1-\frac{\alpha}{n}}}{n\ln{2}} \, \|f\|_{M^{\Phi,\lambda}(0)} 
\int\limits_{|B_\delta|}^\infty t^\frac{\alpha}{n}\Phi^{-1}\left(\frac{(\max\{|B_r|,t\})^\lambda}{t}\right)\frac{dt}{t}\\
&\le& C_8 \, \|f\|_{M^{\Phi,\lambda}(0)} \left[ \int\limits_{|B_\delta|}^{|B_r|} t^\frac{\alpha}{n}\Phi^{-1}\left(\frac{|B_r|^\lambda}{t}\right)\frac{dt}{t} + 
\int\limits_{|B_r|}^{\infty} t^\frac{\alpha}{n}\Phi^{-1}(t^{\lambda-1})\frac{dt}{t} \right], 
\end{eqnarray*}
where $C_8 = \frac{4^{\lambda n}2^{n+1} v_n^{1-\frac{\alpha}{n}}}{n\ln{2}} \leq \frac{4^n \cdot 2^{n+2} \cdot v_n^{1-\frac{\alpha}{n}}}{n}$. 
Based on the assumptions of \eqref {Phi14}, \eqref {Orlicz15-2} and the fact that $|B_\delta| < |B_r|$ we get
\begin{eqnarray*}
I_4 f(x) &\le& C_8 \,\|f\|_{M^{\Phi,\lambda}(0)} \left[C_2\, \Psi^{-1}\left(\frac{|B_r|^\mu}{|B_\delta|}\right)+C_1\, \Psi^{-1}(|B_r|^{\mu-1})\right]\\
&\le& C_{8} \, (C_2 + C_1) \, \|f\|_{M^{\Phi,\lambda}(0)} \Psi^{-1}\left(\frac{|B_r|^\mu}{|B_\delta|}\right).
\end{eqnarray*}
Thus, for $x\in B_r^2$ we obtain
\begin{eqnarray*}
| I_\alpha f(x)| \le I_3 f(x)+I_4 f(x) \le C_9 \, \|f\|_{M^{\Phi,\lambda}(0)} \Psi^{-1}\left(\frac{|B_r|^\mu}{|B_\delta|}\right),
\end{eqnarray*}
with $C_9 = v_n^{-\alpha/n}\,C_0\,C_2\,C_H + C_8 \, (C_1 + C_2)$. Then
\begin{eqnarray*}
\int_{B_r^2}\Psi\Bigl(\frac{|I_\alpha f(x)|}{C_9 \, \|f\|_{M^{\Phi,\lambda}(0)}}\Bigr)\,dx &\le& \int_{B_r}\frac{|B_r|^\mu}{|B_\delta|} \,dx\\
&=& |B_r|^{\mu-\lambda}\int_{B_r}\Phi\Bigl(\frac{Mf(x)}{C_0 \|f\|_{M^{\Phi,\lambda}(0)} }\Bigr)\,dx\\
&\le& |B_r|^{\mu-\lambda}\int_{B_r}\Phi\Bigl(\frac{Mf(x)}{\|Mf\|_{M^{\Phi,\lambda}(0)} }\Bigr)\,dx \le |B_r|^\mu.
\end{eqnarray*}
Finally, since $B_r = B_r^1 \cup B_r^2$ and the last two sets are disjoint, and by the convexity of $\Psi$ it follows that
\begin{eqnarray*}
\int\limits_{B_r}\Psi\Bigl(\frac{|I_\alpha f(x)|}{C_3 \, \|f\|_{M^{\Phi,\lambda}(0)}}\Bigr)\,dx 
&=&
\int\limits_{B_r^1}\Psi\Bigl(\frac{|I_\alpha f(x)|}{C_3 \, \|f\|_{M^{\Phi,\lambda}(0)}}\Bigr)\,dx + 
\int\limits_{B_r^2}\Psi\Bigl(\frac{|I_\alpha f(x)|}{C_3 \, \|f\|_{M^{\Phi,\lambda}(0)}}\Bigr)\,dx \\
&\le&\frac12\int\limits_{B_r^1}\Psi\Bigl(\frac{|I_\alpha f(x)|}{2 \,C_7 \, \|f\|_{M^{\Phi,\lambda}(0)}}\Bigr)\,dx
+\frac12\int\limits_{B_r^2}\Psi\Bigl(\frac{|I_\alpha f(x)|}{C_9 \, \|f\|_{M^{\Phi,\lambda}(0)}}\Bigr)\,dx\\
&\le& |B_r|^\mu,
\end{eqnarray*}
where $C_3 = 2\max\{2 \, C_7, C_9\}.$ Hence, $\|I_\alpha f\|_{M^{\Psi,\mu}(0)} \le C_3 \,\|f\|_{M^{\Phi,\lambda}(0)}$.

\vspace{3mm}

(ii) Similarly to the previous case, we will present $B_r$ as a union of two disjoint subsets $B_r = B_r^1 \cup B_r^2$, 
where $B_r^1$ and $B_r^2$ are defined in the same way as in the first part of the proof with respect to the constant 
$c_0$, that is,
\begin{equation*}
B_r^1 = \{x \in B_r \colon \Phi\left(\frac{Mf(x)}{c_0 \|f\|_{M^{\Phi, \lambda}(0)}}\right) \le |B_r|^{\lambda - 1}\},
\end{equation*}
and
\begin{equation*}
B_r^2 = \{ x \in B_r \colon \Phi\left(\frac{Mf(x)}{c_0 \|f\|_{M^{\Phi, \lambda}(0)}}\right) > |B_r|^{\lambda - 1}\}.
\end{equation*}
Then we get
$$
\Psi\Bigl(\frac{|I_\alpha f(x)|}{c_3 \,\|f\|_{M^{\Phi,\lambda}(0)}}\Bigr) 
\le \frac12 \Psi\Bigl(\frac{|I_\alpha (f \chi_{B_r^1})(x)|}{4 \,c_7 \, \|f\|_{M^{\Phi,\lambda}(0)}}\Bigr)
+\frac12 \Psi\Bigl(\frac{|I_\alpha (f \chi_{B_r^2})(x)|}{2\,c_9 \, \|f\|_{M^{\Phi,\lambda}(0)}}\Bigr) =: \frac12(I_5+I_6),
$$
where $c_3 = 2 \,\max\{4\,c_7, 2\,c_9\}, c_7 = C_1 \max\{4 \cdot 2^n \cdot c_0\,C_5, C_6\}, 
c_9 = v_n^{-\alpha/n}\,c_0\,C_2\,C_H +C_8(C_1+C_2)$. 
We follow the same calculations as in the proof of Theorem 3(ii) in \cite{BMM2022} and we get
$$
d\left(\Psi\left(\frac{|I_\alpha f(x)|}{c_3 \,\|f\|_{M^{\Phi,\lambda}(0)}}\right), u\right) \le d(I_5,u)+d(I_6,u)
$$
and 
$$
\sup\limits_{u>0}\frac{\Psi(u)}{|B_r|^\mu} \,d\left(\frac{|I_\alpha f(x)|}{c_3 \, \|f\|_{M^{\Phi,\lambda}(0)}},u\right) 
\le \sup\limits_{u>0}\frac{u}{|B_r|^\mu} \,d(I_5,u) + \sup\limits_{u>0}\frac{u}{|B_r|^\mu} \,d(I_6,u),
$$
where we used the property $\Psi(u) \,d(g,u) = v\, d(g, \Psi^{-1}(v)) = v\, d(\Psi(g),v)$ for any $u>0$ with $v = \Psi(u).$

From the first part of the proof of this theorem for any $r>0$ we have
$$
I_5 = \Psi\Bigl(\frac{|I_\alpha (f \chi_{B_r^1})(x)|}{2 \cdot 2 c_7 \, \|f\|_{M^{\Phi,\lambda}(0)}}\Bigr) 
\le \frac 12 |B_{2r}|^{\mu-1}<\frac12|B_r|^{\mu-1}
$$
and 
$$
\sup\limits_{u>0}\frac{u}{|B_r|^\mu} \,d(I_5,u) \le \frac12\sup\limits_{u>0}\frac{u}{|B_r|^\mu} \,d(|B_r|^{\mu-1},u) 
= \frac12\sup\limits_{u>0} u\, d(\frac1{|B_r|},u) \le \frac12.
$$
For $I_6$ from the first part of the proof of this theorem we obtain
$$
I_6 = \Psi\Bigl(\frac{|I_\alpha (f \chi_{B_r^2})(x)|}{2\,c_9 \|f\|_{M^{\Phi,\lambda}(0)}}\Bigr) 
\le \frac12 \frac{|B_r|^\mu}{|B_\delta|},
$$
where $\delta$ is defined as in \eqref{delta} with respect to $c_0$, that is, 
\begin{equation*}
\Phi\left(\frac{Mf(x)}{c_0\|f\|_{M^{\Phi,\lambda}(0)}}\right) = \frac{|B_r|^\lambda}{|B_\delta|}.
\end{equation*}
Thus,
$$
I_6 \le \frac12 |B_r|^{\mu-\lambda}\Phi\left(\frac{Mf(x)}{c_0 \|f\|_{M^{\Phi,\lambda}(0)}}\right)
$$
and doing the same calculations as in the proof of Theorem 3(ii) in \cite{BMM2022} we get
$$
\sup\limits_{u>0}\frac{u}{|B_r|^\mu}d(I_6,u) \le \frac 12.
$$
Hence,
$$
\sup\limits_{u>0}\frac{\Psi(u)}{|B_r|^\mu}d\left(\frac{|I_\alpha f(x)|}{c_3 \|f\|_{M^{\Phi,\lambda}(0)}}, u\right) \le 1
$$
and $\|I_\alpha f\|_{WM^{\Psi,\mu}(0)} \le c_3 \|f\|_{M^{\Phi,\lambda}(0)}.$
\end{proof}

Below we present examples for our Theorem \ref{th_bound}. In our earlier paper \cite{BMM2022} we have shown that 
Example \ref{ex1} holds under conditions \eqref{Phi14} and \eqref{Orlicz15}, which clearly means that it also holds 
under conditions \eqref{Phi14} and \eqref{Orlicz15-2} of Theorem \ref{th_bound}.
\begin{example} \label{ex1}
Let $0 < \alpha < n, 0 \le \lambda < 1, 1 < p < \frac{n(1-\lambda)}{\alpha}, 0 \le a \le \sqrt{1-\frac1p}-(1-\frac1p)$ and 
\begin{equation*} 
\Phi^{-1}(u) = 
\begin{cases}
u^{\frac 1 p} & \text{for}~0 \leq u \leq 1,\\
u^{\frac 1 p} \left(1 + \ln u \right)^{-a} & \text{for}~u \geq1,
\end{cases}
\quad \Psi^{-1}(u) =  u^{\frac{1}{q}} ~~ \text{with} ~ 1 < p < q < \infty.
\end{equation*}
If $\frac{1}{q} = \frac{1}{p} - \frac{\alpha}{n}, \frac{\lambda}{p} = \frac{\mu}{q}$, then conditions \eqref{Phi14} and \eqref{Orlicz15-2} 
of Theorem \ref{th_bound} are satisfied, and the Riesz potential $I_\alpha$ is bounded from $M^{\Phi, \lambda}(0)$ to $M^{\Psi, \mu}(0)$. 
In particular, if $a=0$ we get the Spanne--Peetre type result \cite{Pe69} proved in \cite[Proposition 1.1]{FLL08}, that is, 
the Riesz potential $I_\alpha$ is bounded from $M^{p,\lambda}(0)$ to $M^{q,\mu}(0)$ under the conditions 
$1 < p < \frac{n(1-\lambda)}{\alpha}, 0 \leq \lambda < 1, \frac{1}{q} = \frac 1 p - \frac{\alpha}{n}$ and $\frac{\lambda}{p} 
= \frac{\mu}{q}$.
\end{example}

The next two examples satisfy conditions \eqref{Phi14} and \eqref{Orlicz15-2}, but the requirment \eqref{Orlicz15} does not hold 
for them. 

\begin{example} \label{ex2}
Let $0<\alpha<n,$~$0 < \lambda<1,$~$1<p_1<p_2< \frac{n(1-\lambda)}{\alpha},~1<q_1<q_2<\infty$ and
$$
\Phi(u) = \max (u^{p_1}, u^{p_2}), \qquad \Psi(u) = \max(u^{q_1}, u^{q_2}).
$$ 
If $\frac 1{p_1}-\frac{\alpha}{n} = \frac 1{q_1},~\frac{\lambda}{p_1} < \frac{\mu}{q_1}$ and $\frac 1{p_2}-\frac{\alpha}{n} 
= \frac 1{q_2},~\frac{\lambda}{p_2} = \frac{\mu}{q_2}$ then conditions \eqref{Phi14} and \eqref{Orlicz15-2} of 
Theorem \ref{th_bound} are satisfied and the Riesz potential $I_\alpha$ is bounded from $M^{\Phi, \lambda}(0)$ to 
$M^{\Psi, \mu}(0).$
\end{example}

\begin{example} \label{ex3}

Let $0<\alpha<n,$~$0 < \lambda, \mu<1,$~$1<p_1<p_2< \infty,~1<q_1<q_2<\infty,~a, b>0$ and
\begin{equation*}
\Phi^{-1}(u) = 
\begin{cases}
u^{\frac 1{p_1}}(1-\ln{u})^a \qquad  &\text{for}~ 0<u \le 1,\\
u^{\frac 1{p_2}}(1+\ln{u})^{-b} \qquad &\text{for}~ u \ge 1,
\end{cases}
\end{equation*}
\begin{equation*}
\Psi^{-1}(u) =
\begin{cases}
u^{\frac 1{q_1}}(1-\frac{1-\lambda}{1-\mu}\ln{u})^a \qquad  &\text{for}~ 0<u \le 1,\\
u^{\frac 1{q_2}} \qquad &\text{for}~ u \ge 1.
\end{cases}
\end{equation*}
If $\frac 1{p_1}-\frac{\alpha}{n} = \frac 1{q_1},$ $\frac 1{p_2}-\frac{\alpha}{n} = \frac 1{q_2},~\frac{\lambda}{p_2} 
= \frac{\mu}{q_2},~\frac{\lambda}{p_1} < \frac{\mu}{q_1}$ and
$0<a \le \frac{1-\mu}{1-\lambda}(\frac1{q_1}-\frac1{q_2}),\,\, 0<b \le \frac1{p_2}$, 
then conditions \eqref{Phi14} and \eqref{Orlicz15-2} of Theorem \ref{th_bound} are satisfied and the Riesz potential 
$I_\alpha$ is bounded from $M^{\Phi, \lambda}(0)$ to $M^{\Psi, \mu}(0).$
\end{example}

\section{Two properties of central Morrey--Orlicz spaces} \label{sec:Imbed}
Properties of Morrey and central Morrey spaces were considered by several authors (for example V. I. Burenkov, 
V. S. Guliyev, E. Nakai, Y. Sawano and others). Here we will present some properties of central Morrey--Orlicz 
spaces. It is known that $M^{p, \lambda}(0) \neq \{0\}$ if and only if $\lambda \geq 0$ (see \cite{BG04}). In the next 
proposition we describe when the central Morrey--Orlicz space $M^{\Phi,\lambda}(0)$ is nontrivial.

\begin{proposition}\label{nontrivial}
Let $\Phi$ be an Orlicz function and $\lambda \in \mathbb{R}.$ The space $M^{\Phi,\lambda}(0) \neq \{0\}$ if 
and only if $\lambda \ge 0.$
\end{proposition}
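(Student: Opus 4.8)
The plan is to split into the two implications. For the easy direction, suppose $\lambda < 0$ and take any $f \in M^{\Phi,\lambda}(0)$; I will show $f = 0$ a.e. Fix a ball $B_{r_0}$ and a point $x$ of finite value. For every $r \ge r_0$, the modular inequality $\frac{1}{|B_r|^\lambda}\int_{B_r}\Phi(|f|/\varepsilon)\,dx \le 1$ (valid with $\varepsilon = 2\|f\|_{M^{\Phi,\lambda}(0)}$, say) gives $\int_{B_{r_0}}\Phi(|f(x)|/\varepsilon)\,dx \le \int_{B_r}\Phi(|f(x)|/\varepsilon)\,dx \le |B_r|^{\lambda} = (v_n r^n)^{\lambda}$. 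Since $\lambda < 0$, letting $r \to \infty$ forces $\int_{B_{r_0}}\Phi(|f(x)|/\varepsilon)\,dx = 0$, and because $\Phi$ is an Orlicz function (strictly increasing, $\Phi(0)=0$) this yields $f = 0$ a.e. on $B_{r_0}$; as $r_0$ is arbitrary, $f = 0$.

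For the converse, assume $\lambda \ge 0$; I must exhibit a nonzero element of $M^{\Phi,\lambda}(0)$. The natural candidate is a characteristic function of a ball centered at the origin, $f = \chi_{B_t}$ for some fixed $t > 0$. By Lemma~\ref{lemma1}(iii) we have the exact formula $\|\chi_{B_t}\|_{M^{\Phi,\lambda}(0)} = 1/\Phi^{-1}(|B_t|^{\lambda-1})$, and since $\Phi$ is a (finite-valued, strictly increasing) Orlicz function, $\Phi^{-1}(|B_t|^{\lambda-1})$ is a finite positive number, so $\|\chi_{B_t}\|_{M^{\Phi,\lambda}(0)} < \infty$. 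Hence $\chi_{B_t}$ is a nonzero element of $M^{\Phi,\lambda}(0)$, proving $M^{\Phi,\lambda}(0) \neq \{0\}$.

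If one prefers not to invoke Lemma~\ref{lemma1}(iii), the same conclusion follows by a direct computation: for $f = \chi_{B_t}$ and any $\varepsilon > 0$, $\frac{1}{|B_r|^\lambda}\int_{B_r}\Phi(\chi_{B_t}(x)/\varepsilon)\,dx = \Phi(1/\varepsilon)\,|B_r \cap B_t|/|B_r|^\lambda \le \Phi(1/\varepsilon)\,|B_t|\,|B_r|^{-\lambda}$, which for $\lambda \ge 0$ is bounded over $r \ge t$ by $\Phi(1/\varepsilon)|B_t|^{1-\lambda}$ (and for $r < t$ by $\Phi(1/\varepsilon)|B_t|^{1-\lambda}$ as well after noting $|B_r\cap B_t| = |B_r| \le |B_r|^\lambda |B_t|^{1-\lambda}$ when $\lambda\le 1$, with the case $\lambda>1$ handled similarly); choosing $\varepsilon$ large enough makes this $\le 1$ for all $r$, so $\|\chi_{B_t}\|_{M^{\Phi,\lambda}(0)} < \infty$. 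There is no real obstacle here; the only point requiring a little care is making sure that in the $\lambda < 0$ direction the modular really does control the local integral — i.e., that $\|f\|_{\Phi,\lambda,B_r} \le C$ implies $\int_{B_r}\Phi(|f|/\varepsilon) \le |B_r|^\lambda$ for suitable $\varepsilon$ — which is immediate from the definition of the Luxemburg-type norm $\|\cdot\|_{\Phi,\lambda,B_r}$ together with monotone convergence.
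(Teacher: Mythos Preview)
Your argument for $\lambda<0$ is correct and essentially identical to the paper's (the paper normalizes by $\|f\|_{M^{\Phi,\lambda}(0)}$ rather than $2\|f\|_{M^{\Phi,\lambda}(0)}$, but this is immaterial).

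There is, however, a genuine gap in the direction $\lambda\ge 0$: your test function $\chi_{B_t}$ does \emph{not} belong to $M^{\Phi,\lambda}(0)$ when $\lambda>1$, so the hand-wave ``with the case $\lambda>1$ handled similarly'' is false. Indeed, for $r<t$ one has
\[
\frac{1}{|B_r|^{\lambda}}\int_{B_r}\Phi\!\left(\frac{\chi_{B_t}(x)}{\varepsilon}\right)dx
=\Phi\!\left(\frac{1}{\varepsilon}\right)\frac{|B_r|}{|B_r|^{\lambda}}
=\Phi\!\left(\frac{1}{\varepsilon}\right)|B_r|^{1-\lambda},
\]
and for $\lambda>1$ this tends to $\infty$ as $r\to 0^+$ for every fixed $\varepsilon>0$; hence $\|\chi_{B_t}\|_{\Phi,\lambda,B_r}\to\infty$ and $\|\chi_{B_t}\|_{M^{\Phi,\lambda}(0)}=\infty$. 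Correspondingly, Lemma~\ref{lemma1}(iii) is stated only for $0\le\lambda\le 1$, and its formula $\|\chi_{B_t}\|_{M^{\Phi,\lambda}(0)}=1/\Phi^{-1}(|B_t|^{\lambda-1})$ is simply false for $\lambda>1$.

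The paper avoids this obstruction by choosing an \emph{off-center} ball: it takes $f_R=\chi_{B(x_R,1)}$ with $x_R=(R,0,\dots,0)$, $R>1$. The point is that $B_r\cap B(x_R,1)=\varnothing$ for $r\le R-1$, so small balls contribute $0$ to the supremum, and for $r>R-1$ one has $|B_r|^{\lambda}/|B_r\cap B(x_R,1)|\ge |B_{R-1}|^{\lambda}/v_n>0$ uniformly, yielding $\|f_R\|_{M^{\Phi,\lambda}(0)}\le 1/\Phi^{-1}(|B_{R-1}|^{\lambda}/v_n)<\infty$ for every $\lambda\ge 0$. Your argument is fine for $0\le\lambda\le 1$, but to cover all of $\lambda\ge 0$ you need this (or a similar) modification of the test function.
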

\begin{proof}
Let first $\lambda<0$ and $f \in M^{\Phi,\lambda}(0),$ such that $f \not\equiv 0.$ Then 
$$
\sup\limits_{r>0} \Bigl\|\frac{f}{\|f\|_{M^{\Phi,\lambda}(0)}}\Bigr\|_{\Phi,\lambda,B_r} = 1
$$
and therefore
$$
\Bigl\|\frac{f}{\|f\|_{M^{\Phi,\lambda}(0)}}\Bigr\|_{\Phi,\lambda,B_r} \le 1 \,\,\text{for all}\,\,r>0.
$$
Thus,
$$
\frac1{|B_r|^\lambda}\int\limits_{B_r}\Phi\left(\frac{|f(x)|}{\|f\|_{M^{\Phi,\lambda}(0)}}\right)\,dx \le 1\,\,\text{for all}\,\, r>0.
$$
On the other hand, there exists $t_0>0,$ such that $\int\limits_{B_{t_0}} \Phi\left(\frac{|f(x)|}{\|f\|_{M^{\Phi,\lambda}(0)}}\right)\,dx>0$ 
and for any $r>t_0$ and $\lambda<0$ we have
$$
\int\limits_{B_{t_0}} \Phi\left(\frac{|f(x)|}{\|f\|_{M^{\Phi,\lambda}(0)}}\right)\,dx 
\le \int\limits_{B_r} \Phi\left(\frac{|f(x)|}{\|f\|_{M^{\Phi,\lambda}(0)}}\right)\,dx \le |B_r|^\lambda \rightarrow 0 \,\, \text{as}\,\, r \rightarrow \infty,
$$
which means that $f(x) = 0$ on $B_{t_0}$ and we are done.

Let now $\lambda \ge 0.$ Then we will show that there exists $f \in M^{\Phi,\lambda}(0),$ such that $f \not \equiv 0$. 
We follow ideas from \cite[Proposition 1]{KS17} and consider function $f_R(x) = \chi_{B(x_R,1)}(x),$ where $R > 1$ 
and $x_R = (R,0,...,0).$ We will show that $f \in M^{\Phi,\lambda}(0)$ for any $\lambda \ge 0.$ In our previous paper 
we have shown that 
$$
\|f_R\|_{M^{\Phi,\lambda}(0)} = \sup\limits_{r>R-1}\frac1{\Phi^{-1}(\frac{|B_r|^\lambda}{|B_r \cap B(x_R,1)|})},
$$
for details we refer to the proof of Theorem 2 in \cite{BMM2022}. Since $|B_r \cap B(x_R,1)| \le |B(x_R,1)| = v_n$ it follows 
that $\frac{|B_r|^\lambda}{|B_r \cap B(x_R,1)|} \ge \frac{|B_r|^\lambda}{v_n}$ and $\frac1{\Phi^{-1}\bigl(\frac{|B_r|^\lambda}{|B_r \cap B(x_R,1)|})} 
\le \frac 1{\Phi^{-1}\bigl(\frac{|B_r|^\lambda}{v_n }\bigr)}.$ Thus,
\begin{eqnarray*}
\|f_R\|_{M^{\Phi,\lambda}(0)} = \sup\limits_{r>R-1} \frac1{\Phi^{-1}\bigl(\frac{|B_r|^\lambda}{|B_r \cap B(x_R,1)|}\bigr)} 
&\le& \sup\limits_{r>R-1}\frac1{\Phi^{-1}\bigl(\frac{|B_r|^\lambda}{v_n }\bigr)}=\frac1{\Phi^{-1}\bigl(\frac{|B_{R-1}|^\lambda}{v_n }\bigr)},
\end{eqnarray*}
where the last equality is true since $\lambda \ge 0.$ Therefore, $f_R \in M^{\Phi,\lambda}(0).$
\end{proof}

Next we consider inclusion properties of central Morrey--Orlicz spaces. In the case of classical Morrey and classical central 
Morrey spaces it is known that if $1 \leq p < q < \infty, 0 \leq \mu < \lambda < 1$ and 
 $\frac{1 - \lambda}{p} = \frac{1 - \mu}{q}$, then 
 \begin{equation} \label{embeddings}
 M^{q, \mu}({\mathbb R^n}) \overset{1}{\hookrightarrow} M^{p, \lambda}({\mathbb R^n})  
 \quad {\rm and} \quad M^{q, \mu}(0) \overset{1}{\hookrightarrow} M^{p, \lambda}(0).
 \end{equation}
 Both inclusions are proper (see, for example, \cite{GHI18}). We also note that the second embedding in (\ref{embeddings}) 
 is also true for $1 < \lambda < \mu$. We have shown in \cite{BMM2022} that the embeddings (\ref{embeddings}) follow 
 by the Hölder--Rogers inequality with $\frac{q}{p} > 1$. In the next theorem we present inclusion properties of central 
 Morrey--Orlicz spaces. 
\begin{proposition}
Let $\Phi$ and $\Psi$ be Orlicz functions, $0 \le \lambda, \mu<1$. Then $M^{\Psi, \mu}(0) \hookrightarrow M^{\Phi, \lambda}(0)$ 
if and only if there are constants $A_1, A_2>0,$ such that 
\begin{itemize}
\item[(i)] $\Phi(\frac{u}{A_1}) \le \Psi(u)^{\frac{\lambda-1}{\mu-1}}~ \text{for all}~u>0$
and
\item[(ii)] $\Phi(\frac{u}{A_2}) \le \Psi(u) r^{\lambda-\mu}~ \text{for all}~ u,r>0,$ satisfying $\Psi^{-1}(r^{\mu-1})<u$.
\end{itemize}
\end{proposition}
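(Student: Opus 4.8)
The plan is to prove the two implications separately. The direction ``(i) and (ii) $\Rightarrow$ embedding'' will be a direct splitting argument on each centred ball, while the converse will be obtained by testing on two carefully chosen families of functions and invoking Lemma~\ref{lemma1}(iii). Throughout I use that, since $0\le\lambda,\mu<1$, the exponent $\frac{\lambda-1}{\mu-1}=\frac{1-\lambda}{1-\mu}$ is positive, and that an Orlicz function $\Psi$ is a continuous strictly increasing bijection of $[0,\infty)$ onto itself with $\Psi^{-1}$ its genuine inverse.

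\emph{Sufficiency.} Assume (i) and (ii) with constants $A_1,A_2$. By homogeneity we may take $f$ with $\|f\|_{M^{\Psi,\mu}(0)}\le1$. Fix $r>0$, write $\varrho=|B_r|$, and split $B_r=E_1\cup E_2$ with $E_1=\{x\in B_r\colon\Psi(|f(x)|)\le\varrho^{\mu-1}\}$. On $E_1$, condition (i) and the monotonicity of $s\mapsto s^{(\lambda-1)/(\mu-1)}$ give $\Phi(|f(x)|/A_1)\le\Psi(|f(x)|)^{(\lambda-1)/(\mu-1)}\le\varrho^{\lambda-1}$, hence $\int_{E_1}\Phi(|f|/A_1)\,dx\le|E_1|\,\varrho^{\lambda-1}\le\varrho^{\lambda}$. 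On $E_2$ we have $\Psi^{-1}(\varrho^{\mu-1})<|f(x)|$, so condition (ii) applied with $u=|f(x)|$ and $r=\varrho$ gives $\Phi(|f(x)|/A_2)\le\Psi(|f(x)|)\,\varrho^{\lambda-\mu}$, whence $\int_{E_2}\Phi(|f|/A_2)\,dx\le\varrho^{\lambda-\mu}\int_{B_r}\Psi(|f|)\,dx\le\varrho^{\lambda-\mu}\varrho^{\mu}=\varrho^{\lambda}$, using $\|f\|_{\Psi,\mu,B_r}\le1$. Combining with the convexity estimate $\Phi(ct)\le c\Phi(t)$ for $0\le c\le1$ (with $c=A_i/(A_1+A_2)$) yields $\int_{B_r}\Phi(|f|/(A_1+A_2))\,dx\le|B_r|^{\lambda}$, i.e. $\|f\|_{\Phi,\lambda,B_r}\le A_1+A_2$; taking $\sup_{r>0}$ gives the embedding with constant $A_1+A_2$.

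\emph{Necessity.} Suppose $\|g\|_{M^{\Phi,\lambda}(0)}\le C\|g\|_{M^{\Psi,\mu}(0)}$ for all $g$. For (i): testing on $g=\chi_{B_t}$ and using Lemma~\ref{lemma1}(iii) gives $\Psi^{-1}(|B_t|^{\mu-1})\le C\Phi^{-1}(|B_t|^{\lambda-1})$ for all $t>0$; as $\mu-1<0$, the quantity $|B_t|^{\mu-1}$ exhausts $(0,\infty)$, so writing $u=\Psi^{-1}(|B_t|^{\mu-1})$ (which then exhausts $(0,\infty)$) and $|B_t|^{\lambda-1}=\Psi(u)^{(\lambda-1)/(\mu-1)}$, then applying $\Phi$ to $u/C\le\Phi^{-1}(\Psi(u)^{(\lambda-1)/(\mu-1)})$, we obtain (i) with $A_1=C$. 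For (ii): fix $u,r>0$ with $\Psi^{-1}(r^{\mu-1})<u$ and set $e:=r^{\mu}/\Psi(u)$; the hypothesis says precisely $0<e<r$. Choose $\rho_1<\rho$ with $|B_{\rho_1}|=r-e$, $|B_\rho|=r$, and let $g:=u\chi_{B_\rho\setminus B_{\rho_1}}$. The crucial computation is $\|g\|_{M^{\Psi,\mu}(0)}=1$: the local norm $\|g\|_{\Psi,\mu,B_s}$ vanishes for $s\le\rho_1$; equals $u/\Psi^{-1}\big(|B_s|^{\mu}/(|B_s|-(r-e))\big)$ for $\rho_1\le s\le\rho$, which is $\le u/\Psi^{-1}(r^{\mu}/e)=u/\Psi^{-1}(\Psi(u))=1$ because $t\mapsto t^{\mu}/(t-(r-e))$ is decreasing on $[r-e,r]$ (an elementary derivative check using $0\le\mu<1$); and equals $u/\Psi^{-1}(|B_s|^{\mu}/e)$ for $s\ge\rho$, which is nonincreasing in $s$ since $\mu\ge0$, hence also $\le1$, with equality at $s=\rho$. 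Since $\|g\|_{M^{\Phi,\lambda}(0)}\ge\|g\|_{\Phi,\lambda,B_\rho}=u/\Phi^{-1}(r^{\lambda}/e)$, the embedding gives $u/C\le\Phi^{-1}(r^{\lambda}/e)$, and applying $\Phi$ yields $\Phi(u/C)\le r^{\lambda}/e=\Psi(u)r^{\lambda-\mu}$, i.e. (ii) with $A_2=C$.

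I expect the main difficulty to lie in the necessity of (ii): a characteristic function of a centred ball merely reproduces (i), so one must pass to a two-valued function supported on an annulus $B_\rho\setminus B_{\rho_1}$, and the delicate point is verifying that its central Morrey--Orlicz norm is exactly $1$ (or at least bounded by an absolute constant), which forces the case analysis over the three ranges of the radius $s$ and the monotonicity property of $t\mapsto t^{\mu}/(t-(r-e))$ noted above. The remaining steps are routine manipulations of the Luxemburg-type functionals defining the spaces.
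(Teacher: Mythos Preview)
Your proof is correct. The sufficiency argument and the necessity of (i) coincide with the paper's: the same splitting of $B_r$ according to whether $\Psi(|f|/\|f\|)$ lies above or below $|B_r|^{\mu-1}$, and the same test on $\chi_{B_t}$ via Lemma~\ref{lemma1}(iii). (Your constant $A_1+A_2$ is in fact slightly sharper than the paper's $2\max(A_1,A_2)$.)

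The genuine difference is in the necessity of (ii). The paper does not prove this step at all; it simply refers the reader to arguments in Kita (2014) and Nakai (2008). You instead give a self-contained construction: for given $u,r$ with $\Psi^{-1}(r^{\mu-1})<u$ you build the annular function $g=u\,\chi_{B_\rho\setminus B_{\rho_1}}$ with $|B_\rho|=r$ and $|B_\rho|-|B_{\rho_1}|=e=r^{\mu}/\Psi(u)$, verify $\|g\|_{M^{\Psi,\mu}(0)}=1$ by the monotonicity of $t\mapsto t^{\mu}/(t-(r-e))$ on $(r-e,r]$ and of $t\mapsto t^{\mu}/e$ for $t\ge r$, and then read off $\Phi(u/C)\le r^{\lambda}/e=\Psi(u)r^{\lambda-\mu}$ from the local $\Phi$-norm at $s=\rho$. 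This is exactly the kind of argument the cited references supply (adapted to the centred setting), so your write-up makes the paper's proposition self-contained where the paper itself is not. One cosmetic point: the interval for the middle range should be $(\rho_1,\rho]$ rather than $[\rho_1,\rho]$, since at $s=\rho_1$ the local norm is $0$ and your formula has a vanishing denominator; this does not affect the argument.
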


\begin{proof}
Let first $f \in M^{\Psi, \mu}(0)$, $f \not\equiv 0,$ $B_r$ be any open ball in $\mathbb{R}^n$ and functions $\Phi$ and $\Psi$ 
satisfy conditions (i) and (ii). Then 
$$\frac 1{|B_r|^\mu}\int_{B_r} \Psi\left(\frac{|f(x)|}{\|f\|_{M^{\Psi, \mu}(0)}}\right)\,dx \le 1$$ 
and so $\Psi \left(\frac{|f(x)|}{\|f\|_{M^{\Psi, \mu}(0)}} \right)<\infty$~ a.e. in $B_r$. We divide $B_r$ into two disjoint subsets
\begin{equation*}
B_r^3:=\Bigl\{x \in B_r: |B_r|^{\mu-1} < \Psi\left(\frac{|f(x)|}{\|f\|_{M^{\Psi, \mu}(0)}}\right)<\infty~ \text{a.e.}\Bigr\}
\end{equation*}
and
\begin{equation*}
B_r^4:=\Bigl\{x \in B_r: |B_r|^{\mu-1} \ge \Psi\left(\frac{|f(x)|}{\|f\|_{M^{\Psi, \mu}(0)}}\right) ~ \text{a.e.} \Bigr\}.
\end{equation*}
Let us denote $t = |B_r|$ and $u = \frac{|f(x)|}{\|f\|_{M^{\Psi, \mu}(0)}}$. Then, for $x \in B_r^3$ we have $0<t^{\mu-1}<\Psi(u)$ 
and from (ii) it follows that
\begin{eqnarray*}
& & \frac 1{|B_r|^\lambda} \int_{B_r^3} \Phi\left(\frac{|f(x)|}{A_2\|f\|_{M^{\Psi, \mu}(0)}}\right) \,dx 
\le \frac{1}{|B_r|^\lambda}\int_{B_r^3}\Psi\left(\frac{|f(x)|}{\|f\|_{M^{\Psi, \mu}(0)}}\right)|B_r|^{\lambda-\mu} \,dx \\
&=& 
\frac{1}{|B_r|^\mu} \int_{B_r^3}\Psi\left(\frac{|f(x)|}{\|f\|_{M^{\Psi, \mu}(0)}}\right)\,dx
 \le \frac{1}{|B_r|^\mu} \int_{B_r}\Psi\left(\frac{|f(x)|}{\|f\|_{M^{\Psi, \mu}(0)}}\right)\,dx \le 1.
\end{eqnarray*}
 For  $x \in B_r^4$ we get $0<\Psi(u) \le t^{\mu-1}$ and from (i) it follows that
\begin{eqnarray*}
\frac 1{|B_r|^\lambda} \int_{B_r^4} \Phi\left(\frac{|f(x)|}{A_1\|f\|_{M^{\Psi, \mu}(0)}}\right) \,dx 
&\le & \frac{1}{|B_r|^\lambda}\int_{B_r^4}\Psi\left(\frac{|f(x)|}{\|f\|_{M^{\Psi, \mu}(0)}}\right)^{\frac{\lambda-1}{\mu-1}}\,dx \\
&\le & 
\frac{1}{|B_r|^\lambda} \int_{B_r^4}\left(|B_r|^{\mu-1}\right)^\frac{\lambda-1}{\mu-1}\,dx = \frac{|B_r^4|}{|B_r|} \le 1.
\end{eqnarray*}

Thus,
$$
\frac 1 {|B_r|^\lambda} \int_{B_r} \Phi\left(\frac{|f(x)|}{2 \max(A_1,A_2)\|f\|_{M^{\Psi, \mu}(0)}}\right)\,dx
$$
$$
\le \frac{1}{2|B_r|^\lambda} \left[\int_{B_r^3} \Phi \left(\frac{|f(x)|}{A_1\|f\|_{M^{\Psi, \mu}(0)}}\right) \,dx 
+ \int_{B_r^4}\Phi\left(\frac{|f(x)|}{A_2 \|f\|_{M^{\Psi, \mu}(0)}}\right)\,dx \right] \le 1
$$
and so $\|f\|_{M^{\Phi,\lambda}(0)} \le 2 \max(A_1,A_2)\|f\|_{M^{\Psi, \mu}(0)}$.

Let now $\|f\|_{M^{\Phi,\lambda}(0)} \le C \|f\|_{M^{\Psi,\mu}(0)}$ for any $f\in M^{\Psi,\mu}(0)$ and some 
constant $C>0.$ First for any $t>0$ we consider $f_t(x) = \chi_{B_t}(x).$ Then, using Lemma \ref{lemma1} (iii) we obtain
$$
\frac{1}{\Phi^{-1}(|B_t|^{\lambda-1})} \le \frac{C}{\Psi^{-1}(|B_t|^{\mu-1})},
$$
which also means that $\Psi^{-1}(s^{\mu-1}) \le C \Phi^{-1}(s^{\lambda-1})$ for all $s>0$ or $s^{\mu-1} 
\le \Psi(C\Phi^{-1}(s^{\lambda-1}))$.
By change of variables $\Phi^{-1}(s^{\lambda-1}) = u$ we get 
$$
\Phi(u)^{\frac{\mu-1}{\lambda-1}} \le \Psi(Cu) ~{\rm for \,all} ~ u>0,
$$
so we have condition (i) with $A_1 = C$.

For the proof of the second part, i.e. to prove the necessity of the condition (ii), we refer to the proof of Lemma 4.12 and Theorem 4.1 
in \cite{Kita2014} and to Theorem 4.9 and Lemma 4.10 in \cite{Na08a}.
\end{proof}


\vspace{3mm}

\noindent
{\footnotesize Department of Engineering Sciences and Mathematics\\
Lule\r{a} University of Technology, SE-971 87 Lule\r{a}, Sweden\\
~{\it E-mail address: {\tt evgeniya.burtseva@ltu.se}}\\
}

\vspace{-2mm}

\noindent
{\footnotesize Institute of Mathematics, Pozna\'n University of Technology\\
ul. Piotrowo 3a, 60-965 Pozna\'n, Poland\\
~{\it E-mail address: {\tt lech.maligranda@put.poznan.pl}}\\
and\\
{\footnotesize Department of Engineering Sciences and Mathematics\\
Lule\r{a} University of Technology, SE-971 87 Lule\r{a}, Sweden\\
~{\it E-mail address: {\tt lech.maligranda@associated.ltu.se}}\\
}}


\begin{thebibliography}{39}
{\small

\bibitem{BG04} V. I. Burenkov and H. V. Guliyev, {\it Necessary and sufficient conditions for boundedness 
of the maximal operator in local Morrey-type spaces}, Studia Math. 163 (2004), no. 2, 157--176.

\bibitem{BGG2007} V. I. Burenkov, V. S. Guliev and G. V. Guliev, {\it Necessary and sufficient conditions for 
the boundedness of the Riesz potential in local Morrey-type spaces}, Dokl. Akad. Nauk 412 (2007), no. 5, 
585--589 (Russian); English translation in Dokl. Math. 75 (2007), no. 1, 103--107.

\bibitem{BMM2022} E. Burtseva, L. Maligranda and K. Matsuoka, {\it Boundedness of the Riesz potential 
in central Morrey--Orlicz spaces}, Positivity 26 (2022), no. 1, paper no. 22, 26 pp.

\bibitem{FLL08} Z. W. Fu, Y. Lin and S. Lu, {\it $\lambda$-central BMO estimates for commutators 
of singular integral operators with rough kernels}, Acta Math. Sin. (Engl. Ser.) 24 (2008), no. 3, 373--386. 

\bibitem{GHI18} H. Gunawan, D. I. Hakim and M. Idris, {\it Proper inclusions of Morrey spaces}, Glas. Mat. 
Ser. III 53 (73) (2018), no. 1, 143--151.

\bibitem{hed} L. I. Hedberg, {\it On certain convolution inequalities}, Proc. Amer. Math. Soc. 36 (1972), 505--510.

\bibitem{Kita2014} H. Kita, {\it Some inclusion relations of Orlicz--Morrey spaces and the Hardy--Littlewood 
maximal function}, in: ``Banach and Function Spaces IV" (ISBFS 2012), Yokohama Publ., Yokohama 2014, 
81--116.

\bibitem{KS17} Y. Komori-Furuya and E. Sato, {\it Fractional integral operators on central Morrey spaces}, 
Math. Inequal. Appl. 20 (2017), no. 3, 801--813. 

\bibitem{KR61} M. A. Krasnoselskii and Ja. B. Rutickii, {\it Convex  Functions and Orlicz Spaces}, 
Noordhoff, Groningen 1961.

\bibitem{Ma89} L. Maligranda, {\it Orlicz Spaces and Interpolation}, Seminars in Math. 5, 
Universidade Estadual de Campinas, Campinas 1989.

\bibitem{MM15} L. Maligranda and K. Matsuoka, {\it Maximal function in Beurling--Orlicz and central 
Morrey--Orlicz spaces}, Colloq. Math. 138 (2015), no. 2, 165--181. 

\bibitem{Na08a} E. Nakai, {\it Orlicz-Morrey spaces and the Hardy-Littlewood maximal function}, Studia Math. 
188 (2008), no. 3, 193--221.

\bibitem{Pe69} J. Peetre, {\it On the theory of {$\mathcal{L}_{p,\lambda}$} spaces}, 
J. Functional Analysis 4 (1969), 71--87.

}
\end{thebibliography}
\end{document}